\newtheorem{theorem}{Theorem}
\newtheorem{corollary}[theorem]{Corollary}
\newtheorem{lemma}[theorem]{Lemma}
\newtheorem{proposition}[theorem]{Proposition}
\theoremstyle{definition}
\newtheorem{definition}[theorem]{Definition}
\theoremstyle{remark}
\newtheorem{remark}[theorem]{Remark}
\newtheorem{claim}{Claim}
\numberwithin{theorem}{section}
\numberwithin{equation}{section}
\newcommand{\B}[1]{\mathbb{#1}}
\newcommand{\C}[1]{\mathcal{#1}}
\DeclareMathOperator*{\lip}{Lip}
\begin{document}

\title{An Overview on Laakso Spaces}

\author{Marco Capolli\\[1em] \small Università degli studi di Padova, Department of Mathematics ``Tullio Levi-Civita''}

\date{\today}

\maketitle
 
\begin{abstract}
    Laakso's construction is a famous example of an Ahlfors $Q$-regular metric measure space admitting a weak $(1,1)$-Poincar\'{e} inequality that can not be embedded in $\B{R}^n$ for any $n$. The construction is of particular interest because it works for any fixed dimension $Q>1$, even fractional ones. In this paper we will shed some light on Laakso's work by expanding some of his statements and proving results that were left unproved in the original paper.
    \bigskip

    \noindent
    \textbf{Keywords.} Laakso Space; Geodesics.
    \smallskip

    \noindent
    \textbf{MSC2010.} Primary: 51K05. Secondary: 49J52, 46B07.
\end{abstract}
	
\section{Introduction}

\renewcommand{\thefootnote}{\Alph{footnote}}
	
    The doubling property and the Poincar\'e inequality are two important tools used in mathematical analysis and, more specifically, in geometric measure theory. They are regularity properties that are often required in order to work in a setting with controlled geometry. In this context, the term \emph{PI space} is used to identify those spaces that admit both properties. 
	
    There are many interesting directions in which the study of PI spaces can go. One is to study properties of differentiable functions in metric measure space \cite{alberti2016differentiability, preiss2012frechet, P}. Worth mentioning in this context is a work by Cheeger \cite{cheeger1999differentiability}, in which he proved that the assumption of being a PI space is enough to recover a Rademacher-like theorem for differentiability of Lipschitz functions. The reader curious about this argument can also find many interesting results in this sense in \cite{bate2015structure} and the references therein.
	
    Related to Rademacher's theorem, much research has been done in the opposite direction. The question asked in this case is the following: given a null set $N\subset X$ in a metric measure space, is it possible to find a Lipschitz function $f:X\to\B{R}$ that is differentiable at no point of $N$? The answer to this question, which is positive in some cases and negative in others, lead to the definition of \emph{Universal Differentiability Sets} (UDS for short). A UDS is a set such that, for every real-valued Lipschitz function, at least one point of the set is a point of differentiability for the function. Their existence has been studied first in the Euclidean setting \cite{dore2011compact, preiss2015differentiability} and then in more general metric measure spaces \cite{pinamonti2017measure, pinamonti2020universal}. 

    The reader interested in other results concerning differentiability of Lipschitz function and the structure of null sets may also refer to \cite{alberti2010differentiability} and the references therein.
	
    Many of the techniques used to construct a UDS come from a work by Preiss \cite{preiss1990differentiability}. The main tool is a relation between the concept of differentiability and the existence of a \emph{maximal directional derivative}. So far, in the literature, this approach has always been applied to PI spaces. 
	
    In \cite{capolli2022maximal}, M. Capolli, A. Pinamonti and G. Speight, investigated to what extent the techniques from \cite{preiss1990differentiability} can be extended to the setting of Laakso spaces. The interest in the question relies on the fact that, as proved in \cite{laakso2000ahlfors}, Laakso spaces are PI spaces. However, they do not posses many of the properties that Euclidean spaces or Carnot groups have. For example, Laakso spaces do not have an underlying group structure. Due to this fact it is not possible to define translations and dilations in a Laakso spaces. As showed in \cite{capolli2022maximal} the absence of a linear structure makes working in this setting more difficult and the techniques from \cite{preiss1990differentiability} cannot be applied light-heartedly in this context.
	
    All the proofs and techniques introduced in \cite{capolli2022maximal} are based on definitions and results from \cite{laakso2000ahlfors}. However, the original work lacks of some explanations and rigorous proofs for most of the results presented in the first chapter. Indeed, the proof of many of the results presented by Laakso in the first part of his work were left to the intuition of the reader. It is the author's opinion that Laakso intended to expand on his work in two manuscripts he was working on. In \cite{laakso2000ahlfors} he cited those manuscripts as forthcoming. Unfortunately, to the author's knowledge, T.J. Laakso retired from the Academic career and these manuscripts were never published, hence this remains only a speculation.
	
    With this in mind, the goal of this paper is to expand Laakso's ideas with some definitions and examples. We will also provide detailed proofs of \cite[Propositions 1.1 and 1.2]{laakso2000ahlfors}, that were missing in the original paper.
	
    The structure of this paper is the following: in section 2 we repeat step-by-step the construction of a Laakso space, with some in depth explanation on the reason behind some definitions. In section 3 we prove the main results from the first chapter of \cite{laakso2000ahlfors} about the structure of geodesics in a Laakso space. In section 4 a simple example of a Laakso space constructed from the classical middle-third Cantor set is provided. In the last section a list of possible further developments is also briefly discussed.
	
\section{Construction of a Laakso Space}
	
    The main idea behind Laakso's construction is to define a metric space as a quotient. The starting set is a Cartesian product $\C{F}\times I$, where $\C{F}$ is a Cantor-like fractal and $I=[0,1]$. From there, thanks to a set of identifications, Laakso obtain a path connected set endowed with a metric. A similar approach was also used, in a different context, also in \cite{barlow2004markov}.
	
    We start by recalling some notations and results from \cite{hutchinson1981fractals} that will be useful when dealing with coordinates on the fractal $\C{F}$.
	
    \begin{definition}
        Let $(X,d)$ be a complete metric space. An \emph{Iterated Function System} (IFS for short) in $(X,d)$ is a finite family of functions $\mathcal{F} = \{ f_0,\dots,f_n \}$ such that $r_i=\lip(f_i)\in(0,1)$ for each $i=0,\dots,n$. Functions of this kind are also called \emph{contractions}.
    \end{definition}
	
    Denote with $\B{K}(X)$ the set of all compact subsets of $X$. From an IFS $\mathcal{F}$ we can define a function $\cup \mathcal{F} : \mathbb{K}(X)\rightarrow \mathbb{K}(X)$ as:
    \[
        \cup \mathcal{F}(A) := \bigcup_{i=0}^n f_i(A).
    \]
    Since $\B{K}(X)$ equipped with the Hausdroff metric is complete whenever $X$ is complete, we can use Banach's fixed point theorem to define the attractor of an IFS.
	
    \begin{definition}\label{def_IFS}
        Let $\C{F}$ be an IFS in a complete metric space $(X,d)$. The \emph{attractor} of $\C{F}$ is the compact $A\subset X$ such that $\cup \C{F}(A) = A$. The attractor of an IFS is unique and is denoted by $|\C{F}|$ or, with a standard abuse of notation, simply by $\C{F}$.
    \end{definition}
	
    The main advantage of this definition is that it is easy to compute the Hausdorff dimension of the set constructed this way. As proved by Hutchinson in \cite{hutchinson1981fractals}, if we denote with $D$ the unique positive number for which $\sum_{i=0}^n r_i^D=1$, then the Hausdorff dimension of the attractor $\C{F}$ is exactly $D$.
	
    Indeed it is also possible to work the other way around: fix the desired $0<Q<1$ and define a set of Hausdorff dimension exactly $Q$ as the attractor of a suitable IFS. 
	
    Thanks to this approach we are able to put \textquotedblleft coordinates\textquotedblright\ on a fractal defined from an IFS. Let $\{f_0,\dots,f_n\}$ be the IFS and $\C{F}$ his attractor. With $\C{F}_i$ we denote the set $f_i(\C{F})$ for $i\in\{0,\dots,n\}$. In a similar way $\C{F}_{ij} = (f_j\circ f_i)(\C{F}) = f_j(f_i(\C{F}))$ for $i,j\in\{0,\dots,n\}$ and, more in general, if we have a string $a=a_1 a_2\dots$ with $a_i\in\{0,\dots,n\}$ then
    \begin{enumerate}[(1)]
	\item 
            if $a=a_1 \dots a_k$, $\C{F}_a = (f_{a_k}\circ\dots\circ    f_{a_1})(\C{F}) = f_{a_k}(\cdots f_{a_2}(f_{a_1}(\C{F}))\cdots)$ is a compact subset of $\C{F}$,
	\item
            if $a$ is of infinite length, $\C{F}_a$ is a point of $\C{F}$. In particular it is the unique fixed point of the composition $f_a:=\lim_{n\to\infty} f_{a_n}\circ\dots\circ f_{a_1}$.
    \end{enumerate}
	
    The interested readers can find more details in \cite{hutchinson1981fractals}.
	 
    \bigskip

    We can now proceed to show the construction of a general Laakso space.
	
    Choose $1<Q<2$ and $s$ satisfying $\log_{s}2 = Q - 1$. From this choice we have that $s>2$. Denote with $\C{F}$ the attractor of the IFS in $\B{R}$ defined by the functions
    \begin{equation}\label{eq_IFS}
        f_0(x) = \frac{x}{s}\ \  \text{ and }\ \  f_1(x) = \frac{x}{s} + \frac{s-1}{s}.
    \end{equation}
    Notice that it is a Cantor-like fractal with Hausdorff dimension $Q-1$. We now have the starting set in which we want to define the identifications: $\C{F}\times I$ with the metric induced by $\B{R}^2$.

    \begin{remark}\label{convention_vertial}
        In \cite{capolli2022maximal, steinhurst2010diffusions}, as well as in the original paper by Laakso, the Cartesian product for the starting set is $I\times \C{F}$. However, in those papers, the authors call the $I$-coordinate \emph{vertical}, causing some confusion. In this work, in order to be consistent with the fact that the vertical component is usually the second one, we will consider the Cartesian product for the starting set to be $\C{F}\times I$.
    \end{remark}

    To proceed with the construction of the identifications we choose the unique integer $n$ such that $n\leq s<n+1$ and we fix a sequence $\textbf{m} = \{m_i\}_{i\in\B{N}}$ such that the following conditions hold for every $i$:
    \begin{enumerate}[(1)]
        \item $m_i\in\{n,n+1\}$,
        \item $$\frac{n}{n+1}\prod_{j=1}^i m_j^{-1} \leq \frac{1}{s^i} \leq \frac{n+1}{n}\prod_{j=1}^i m_j^{-1}.$$
    \end{enumerate}
    The fact that at least one sequence with these properties exists follows from the condition $n\leq s<n+1$ and an algebraic computation.
    Notice that, in principle, such a sequence is not unique. Once $\textbf{m}$ has been fixed, we use it to define a family of functions $\{\omega_k\}_{k\in\B{N}}$ as following:
    \begin{enumerate}[(1)]
	\item $\omega_1:\{1,\dots,m_1-1\}\to(0,1)$,
	\item
            for $k\geq2$, $\omega_k:\{0,\dots,m_1-1\}\times\dots\times\{0,\dots,m_{k-1}-1\}\times\{1,\dots,m_k-1\}\to(0,1)$,
	\item for every $k\in\B{N}$,
            \begin{equation}\label{omega}
		      \omega_k(n_1,\dots,n_k) = \sum_{j=1}^k n_j\prod_{h=1}^j m_h^{-1}.
		\end{equation}
    \end{enumerate}
    It can be easily checked that the functions just defined are injective for each $k\in\B{N}$.
	
    \begin{definition}
        We call $\omega\in [0,1]$ a \emph{wormhole level of order} $k$ if $\omega = \omega_k(n_1,\dots,n_k)$ for $k\in\B{N}$ and for certain $$(n_1,\dots, n_k)\in \{0,\dots,m_1-1\}\times\dots\times\{0,\dots,m_{k-1}-1\}\times\{1,\dots,m_k-1\}.$$
		
        For $k\in\B{N}$, the set of all wormhole levels of order $k$ will be denoted with $J_k$.
    \end{definition}
    \begin{remark}
        The condition that the last entry of $\omega_k$ cannot be 0 is crucial in order to avoid the overlapping of wormhole levels. This means that, if $k\neq h$, then the image of $\omega_k$ is disjoint from the image of $\omega_h$.
    \end{remark}

    The natural number $k$, which is uniquely determined, will often be called the \emph{order} of the wormhole level. Sometimes we will omit the $k$ in $\omega_k(n_1,\dots,n_k)$ as it will be clear from the number of variables. 
	
    From the way the parameters are chosen, follows a nesting property that will be crucial in the following. 
	
    \begin{lemma}\label{nested_wormhole}
        Let $\omega$ be a wormhole of level $N$. Then for every $M>N$ there exists $\omega^M,\theta^M\in J_M$ such that $0<\theta^M<\omega<\omega^M<1$.
		
        Similarly, let $N_1\leq N_2$ be natural numbers and consider two distinct wormhole levels $\omega^1\in J_{N_1}$ and $\omega^2\in J_{N_2}$. Then, for every $M>N_2$, there exists $\omega^M\in J_M$ such that either $\omega^1 < \omega^M < \omega^2$ or $\omega^2 < \omega^M < \omega^1$, depending of whether $\omega^1<\omega^2$ or vice versa.
    \end{lemma}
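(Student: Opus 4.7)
The plan is to exhibit explicit neighbors of $\omega$ at each higher level $M$ by manipulating the digit tuple used in \eqref{omega}, and then to derive the interlacing statement from an elementary separation estimate.

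For the first assertion, write $\omega = \omega_N(n_1,\dots,n_N)$ and fix $M > N$. I would define the upper neighbor by padding with zeros and a final $1$,
\[
\omega^M := \omega_M(n_1,\dots,n_N,0,\dots,0,1) = \omega + \prod_{h=1}^M m_h^{-1},
\]
verifying that this tuple lies in the domain of $\omega_M$: the original digits satisfy the level-$N$ range constraints, the intermediate zeros sit at non-terminal positions, and the terminal digit $1$ is admissible because $m_M \geq n \geq 2$ (since $s > 2$). The bound $\omega^M < 1$ follows from the telescoping identity $\sum_{j=1}^N (m_j-1)\prod_{h=1}^j m_h^{-1} = 1 - \prod_{h=1}^N m_h^{-1}$, which bounds $\omega$ from above. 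For the lower neighbor, I would decrement the last nontrivial digit and fill with maximal digits:
\[
\theta^M := \omega_M(n_1,\dots,n_{N-1},\,n_N-1,\,m_{N+1}-1,\dots,m_{M-1}-1,\,m_M-1).
\]
Admissibility uses $n_N - 1 \geq 0$ (because $n_N \geq 1$, and position $N$ is not terminal since $N < M$) together with $m_M - 1 \geq 1$. A telescoping computation of the trailing contribution then yields $\theta^M = \omega - \prod_{h=1}^M m_h^{-1}$, and positivity is immediate since $\omega \geq \prod_{h=1}^N m_h^{-1} > \prod_{h=1}^M m_h^{-1}$.

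For the second assertion I would first establish a separation estimate: if $\omega^1 \in J_{N_1}$ and $\omega^2 \in J_{N_2}$ are distinct with $N_1 \leq N_2$, then
\[
|\omega^1 - \omega^2| \geq \prod_{h=1}^{N_2} m_h^{-1}.
\]
Indeed, after formally expanding $\omega^1$ to length $N_2$ with trailing zeros, both values take the form $I/\prod_{h=1}^{N_2} m_h$ for distinct nonnegative integers $I$, so the difference has absolute value at least $\prod_{h=1}^{N_2} m_h^{-1}$. Assuming without loss of generality $\omega^1 < \omega^2$, I would apply the first part to $\omega^2$ and set $\omega^M := \omega^2 - \prod_{h=1}^M m_h^{-1}$, which lies in $J_M$ by the explicit construction above. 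The upper bound $\omega^M < \omega^2$ is automatic, and the lower bound $\omega^M > \omega^1$ follows from $\prod_{h=1}^M m_h^{-1} < \prod_{h=1}^{N_2} m_h^{-1} \leq \omega^2 - \omega^1$, valid because $M > N_2$.

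The main obstacle I foresee is purely bookkeeping: ensuring that each constructed tuple lies in the precise domain of the appropriate $\omega_M$ in the edge cases $N = 1$ (no preceding digits), $M = N+1$ (empty filler block), and $n_N = 1$ (so the decremented digit is $0$, admissible only because it is non-terminal). The key structural input that makes the fillers legitimate is $m_j \geq 2$, which is guaranteed by $n \geq 2$, i.e., by the standing hypothesis $s > 2$ built into the construction.
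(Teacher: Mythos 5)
Your proof is correct and follows essentially the same route as the paper: both construct the required level-$M$ wormholes by explicit digit manipulation (your $\omega^M$ is literally the paper's choice, and your $\theta^M$ differs only in padding with maximal digits rather than zeros-then-one). The only substantive addition is that you spell out the separation estimate $|\omega^1-\omega^2|\geq\prod_{h=1}^{N_2}m_h^{-1}$, which the paper leaves implicit as ``an easy computation''.
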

	
    \begin{proof}
        Let $\omega = \omega(n_1,\dots,n_N).$ Then, for $M>N$, we define the wormhole level $\omega^M := \omega_M(n_1,\dots,n_N,0,\dots,0,1)$. It is the wormhole level defined by $M$ entries, the first $N$ of which are equal to the entries in $\omega$, then all the others are 0s except for the $M$-th, which is 1. If follows from how the function are defined in \eqref{omega} that $\omega<\omega^M<1$. Similarly we define $\theta^M:=\omega_M(n_1,\dots,n_N-1,0,\dots,0,1)$. It is defined exatly as $\omega^M$ except in the $N$-th entry, which is reduced by one. This is possible because $n_N\neq 0$ by definition. With a simple computation we see that $0<\theta^M<\omega$, hence proving the first assertion.
		
        To prove the second assertion take $\omega^1=\omega(n_1,\dots,n_{N_1})$, $\omega^2=\omega(n'_1,\dots,n'_{N_2})$ and suppose that $\omega^1<\omega^2$, the proof in the other case is identical. Then, for $M>N_2$, we define the wormhole level $\omega^M$ in a similar way as in the first part of the proof: $\omega^M:=\omega_M(n'_1,\dots, n'_{N_2}-1,0,\dots, 0, 1)$. From an easy computation follows immediately that $\omega^1 < \omega^M < \omega^2$ as requested.
    \end{proof}
	
    We are now ready to define the identifications. The idea is that the Cantor-like fractal $\C{F}$ can be seen as formed by cells of various order. For example $\C{F}_0$ and $\C{F}_1$ are the two cells of order 1 and $\C{F}_{00},\C{F}_{01},\C{F}_{10}$ and $\C{F}_{11}$ are the four cells of order 2. More in general there are $2^n$ cells of order $n$ and they are denoted by $\C{F}_a$, where $a$ is a string of length $n$. Wormholes levels of order $k$ will be used to jump among successive cells of the same order. For example, wormhole levels of order 1 are used to pass from $\C{F}_0\times I$ to $\C{F}_1\times I$. To move inside smaller cells, e.g. from $\C{F}_{00}\times I$ to $\C{F}_{01}\times I$, we will need to use wormhole level of increasing order. Notice that wormholes of order 2 can be used to move from $\C{F}_{00}\times I$ to $\C{F}_{01}\times I$ or from $\C{F}_{10}\times I$ to $\C{F}_{11}\times I$ but no to move, for example, from $\C{F}_{00}\times I$ to $\C{F}_{11}\times I$. More in general a wormhole level of order $k$ can be used to move from $\C{F}_{a0}\times I$ to $\C{F}_{a1}\times I$, where $a$ is a string of length $k-1$.
	
    Wormhole levels are used to define an equivalence relation on $\C{F}\times I$.
	
    \begin{definition}\label{defident}
        Let $(x_1,y_1), (x_2,y_2)\in\C{F}\times I$. We say that $(x_1,y_1)\sim (x_2,y_2)$ if and only if the following conditions hold:
        \begin{enumerate}[(1)]
            \item $x_1 = x_2 \pm \frac{s-1}{s^k}$, where the sign depends on whether $x_1>x_2$ or $x_1<x_2$,
            \item $y_1 = y_2 = \omega_k(n_1,\dots,n_k)$ is a wormhole level of order $k$.
	\end{enumerate}
	
        We call $\pi$ the identification map defined as $\pi(x_1,y_1):=[x_1,y_1]$, where square bracket denotes equivalence classes in $\C{F}$. 
        The \emph{Laakso space} associated to $\C{F}$ and $\pi$ is then $\C{L}:=\pi(\C{F}\times I)$. Points in $\C{L}$ will be denoted as $[x_1,y_1]$.	The topology on $\C{L}$ is the quotient topology inherited from the Euclidean topology on $\C{F}\times I$.
    \end{definition}
	
    As proved by Laakso in \cite{laakso2000ahlfors}, $\C{L}$ is compact and of Hausdorff dimension $Q$. 
	
    $\C{L}$ is equipped with a natural projection on the vertical coordinate: $h:\C{L}\to [0,1]$ defined as $h([x_1,y_1]):= y_1$. This is well defined because points in $\C{F}\times I$ that are mapped in the same equivalence class by $\pi$ have the same $I$-coordinate. We call $h$ the height function. This name was already used by other authors but it makes more sense in light of Remark \ref{convention_vertial}. 
 
    The same notation will be used also for the projection on the second coordinate on $\C{F}\times I$. Weather we are considering $h$ as a function from $\C{L}$ or from $\C{F}\times I$ will be clear from the context.
	
    \begin{remark}
        Hidden in condition $(1)$ there is the fact that only points belonging to consecutive cells of the same order are identified. This means that, if two points $(x_1,y_1),(x_2,y_2)\in \C{F}\times I$ are projected to the same point for a certain wormhole level of order $k$, then $x_1\in \C{F}_{a0}$ and $x_2\in \C{F}_{a1}$ for some string $a$ of length $k-1$.
    \end{remark}

    There is an easy way for visualizing the action of the identification map $\pi$ with the help of the coordinate system on $\C{F}$ explained after Definition \ref{def_IFS}. We first introduce the $n$-\emph{th switching function} $\nu^n$. It acts on strings of length at least $n$ by taking $a=a_1a_2\dots$ and transforming it into the new string $\nu^n(a)=a'_1 a'_2\dots$ where
    \[
        a'_m = 	\begin{cases} 
				a_m \ \ if\ m\neq n\\
				1 \ \ \ \ if\ m=n\ and\ a_n=0\\
				0\ \ \ \ if\ m=n\ and\ a_n=1.
			\end{cases}
    \]
	
    Now take a point $x_1=\C{F}_a\in\C{F}$ for an infinite string $a=a_1 a_2 \dots$. A wormhole level $y_1=\omega(n_1,\dots,n_k)$ of order $k$ is used to identify the point $(x_1,y_1)\in\C{F}\times I$ with the point $(x_2,y_1)\in\C{F}\times I$, where $x_2=\C{F}_{b}\in\C{F}$ with $b=\nu^k(a)$. Hence, by moving each time to the appropriate height (possibly infinitely many times), we can change any $x_1=\C{F}_a$ into any other $x_2=\C{F}_b$. This combination of moving in the $I$ direction and jumping with wormhole levels is at the base of how paths in $\C{L}$ are defined.

\section{The metric on $\C{L}$}

    This section is devoted to the study of the metric of $\C{L}$. For the \textquotedblleft measure\textquotedblright\ part of the study we refer to \cite[Section 5]{capolli2022maximal}. In \cite{laakso2000ahlfors} Laakso defined a distance on $\C{L}$ and described the geodesics starting from the concept of path. As expected, a path that connects points $x,y\in\C{L}$ is (the image of) a continuous function $p:[0,1]\to\C{L}$ such that $p(0)=x$ and $p(1)=y$. The intuition suggests that, in the Laakso space, such path can be seen as a collection of vertical segments connected via wormholes. 
	
    A distance can then be defined as follows.
    
    \begin{definition}
	Let $x,y\in \C{L}$. Then the distance between $x$ and $y$ is
		\begin{equation}\label{def_dist}
		  |x-y|:=\inf\{\C{H}^1(\Gamma)\,|\,\pi(\Gamma)\text{ is a path joining }x\text{ and }y \}.
		\end{equation}
    \end{definition}

    For the rest of the paper we will denote the distance in $\C{L}$ with $d(x,y)$ instead of $|x-y|$. This is to avoid confusion with the distance in the vertical component (wich is the usual Euclidean distance), that will also play a fundamental role.
	
    The main goal of this section is to give a rigorous proof of \cite[Proposition 1.1]{laakso2000ahlfors} and \cite[Proposition 1.2]{laakso2000ahlfors}. Those results, despite being key when working with paths and geodesics in the Laakso space, were not proven in the original paper by Laakso.

    From now on, when needed, we will identify a point $x\in\C{F}$ with the unique infinite string $a=a_1 a_2 \dots$ such that $x=\C{F}_a$.
    \begin{definition}\label{defasympt}
        We say that $x_1,x_2\in\C{F}$ have \emph{the same asymptotic behaviour} if there exists a positive integer $n$ such that $(x_1)_i=(x_2)_i\ \forall i\geq n$, i.e. if $x_1$ and $x_2$, viewed as strings, are eventually equal. We will denote this fact with $x_1\parallel x_2$.
	
        If such an $n$ does not exists, then we say that $x_1$ and $x_2$ have \emph{different asymptotic behaviour} and we will denote this fact with $x_1\nparallel x_2$.
    \end{definition}

    The construction of a path connecting two points in a Laakso space depends on the asymptotic behaviour of the $\C{F}$-coordinates of the two points we are considering.
	
    \begin{remark}
        When we speak of the $\C{F}$-coordinate (or the $I$-coordinate) of a point $x\in\C{L}$ we are actually speaking of the corresponding coordinate of $\pi^{-1}(x)$ in $\C{F}\times I$. This is a standard abuse of notation.
    \end{remark}

    \begin{proposition}\label{proppath}
        Let $x=[x_1,y_1]$ and $y=[x_2,y_2]$ be two distinct points in $\C{L}$. Then there exists a path $p:[0,1]\to\C{L}$, connecting the two points, that is the image under the map $\pi$ of a family of countably many\footnote{Here by countably many we mean finite or countable infinite.} (closed) vertical line segments $\Gamma\subset\C{F}\times I$.
    \end{proposition}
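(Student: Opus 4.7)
The plan is to split into cases by the relationship between the $\C{F}$-coordinates $x_1, x_2$ viewed as infinite strings. If $x_1 = x_2$, take $\Gamma$ to be the single vertical segment $\{x_1\} \times [\min\{y_1, y_2\}, \max\{y_1, y_2\}]$, whose $\pi$-image is trivially a path from $x$ to $y$.

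If $x_1 \neq x_2$, enumerate the positions where the two strings disagree as $k_1 < k_2 < \cdots$; by Definition \ref{defasympt} this sequence is finite, of length $m$, precisely when $x_1 \parallel x_2$. Set $x^{(0)} = x_1$ and $x^{(j)} = \nu^{k_j}(x^{(j-1)})$, so each $x^{(j)} \in \C{F}$ agrees with $x_2$ on all digits past position $k_j$; in particular $x^{(j)} \to x_2$ in $\C{F}$ (trivially in the finite case, where $x^{(m)} = x_2$). I would then choose wormhole levels $\omega^{(j)} \in J_{k_j}$ and take $\Gamma$ to be the union of the vertical segment joining $(x_1, y_1)$ to $(x_1, \omega^{(1)})$, the segments joining $(x^{(j)}, \omega^{(j)})$ to $(x^{(j)}, \omega^{(j+1)})$ for each $j \geq 1$, and in the finite case a closing segment from $(x_2, \omega^{(m)})$ to $(x_2, y_2)$. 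Definition \ref{defident} gives $\pi(x^{(j-1)}, \omega^{(j)}) = \pi(x^{(j)}, \omega^{(j)})$, so consecutive segments glue continuously after projection, and a natural parametrization (assigning the $j$-th segment to $[1 - 2^{-j+1}, 1 - 2^{-j}]$ in the infinite case) yields the required map $p : [0,1] \to \C{L}$.

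In the finite case any $m$ distinct elements chosen from $J_{k_1}, \ldots, J_{k_m}$ work. The delicate point is the infinite case, where we must arrange $\omega^{(j)} \to y_2$ so that $p$ is continuous at $t = 1$. Here I would combine Lemma \ref{nested_wormhole} with the quantitative estimate that condition (2) on $\mathbf{m}$ forces $\prod_{h=1}^{k} m_h^{-1} \leq (n+1)/(n s^{k}) \to 0$: this implies that $J_k$ is dense in $[0,1]$ with approximation error $O(s^{-k})$, so for each $j$ we may select $\omega^{(j)} \in J_{k_j}$ with $|\omega^{(j)} - y_2| < 2^{-j}$ for $j$ large. Continuity of $p$ at interior gluing times is immediate from the identifications, while continuity at $t = 1$ follows from $x^{(j)} \to x_2$ in $\C{F}$ and $\omega^{(j)} \to y_2$.

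The main obstacle I anticipate is precisely this selection in the infinite case: $y_2$ need not itself be a wormhole level of any order, so Lemma \ref{nested_wormhole} alone---which nestles wormholes about other wormholes---is not quite enough, and the quantitative spacing estimate for $J_k$ coming from condition (2) on $\mathbf{m}$ is what finally makes the selection possible. Once the wormhole levels are fixed, the remaining verifications---countability of $\Gamma$, $x^{(j)} \to x_2$ in $\C{F}$, and continuity at interior gluing times---are routine.
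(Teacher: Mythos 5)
Your proposal follows essentially the same route as the paper: the same digit-by-digit switching algorithm, the same case split on $x_1\parallel x_2$ versus $x_1\nparallel x_2$, and the same gluing argument via $\pi(x^{(j-1)},\omega^{(j)})=\pi(x^{(j)},\omega^{(j)})$. The one genuine difference is how you close off the infinite case. The paper takes at each step the wormhole level of order $k_j$ \emph{closest to the current height}, shows via the Cauchy criterion that the heights $\omega^{(j)}$ converge to some $\overline{\omega}$ (which need not equal $y_2$), and then appends one last vertical segment $\gamma_\infty$ from $(x_2,\overline{\omega})$ to $(x_2,y_2)$. You instead force $\omega^{(j)}\to y_2$ directly using the density of $J_{k}$ with error $O(s^{-k})$, which is correct (the gaps in $J_k$ are at most $2\prod_{h\le k}m_h^{-1}\le 2\tfrac{n+1}{n}s^{-k}$) and dispenses with $\gamma_\infty$; the paper's choice has the advantage of being the ``greedy'' one reused later for length estimates, while yours gives a cleaner continuity argument at $t=1$. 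Two small points to fix: first, with your construction the endpoint $y$ is a limit point of $\pi(\Gamma)$ but need not belong to $\pi(\Gamma)$ itself, so to match the statement literally you should append a final (possibly degenerate) closed vertical segment at $(x_2,y_2)$; second, $x^{(j)}$ agrees with $x_2$ on all digits \emph{up to} position $k_j$ (it disagrees exactly at $k_{j+1},k_{j+2},\dots$), not ``past'' it --- the conclusion $x^{(j)}\to x_2$ is still correct since agreement on the first $k_j$ digits places both points in a common cell of diameter $s^{-k_j}$. You also omit the paper's closing remark on what to do when $y_1$ or $y_2$ is itself a wormhole level (choice of representative in $\pi^{-1}(x)$), though the algorithm goes through with either choice.
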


    \begin{proof}
        The proof is a constructive algorithm and it gives us a way to find both the path and the set of vertical line segments in $\C{F}\times I$ from which it comes from.

        From now on, since there is no risk of confusion, we will just say line segments when speaking of vertical line segments in $\C{F}\times I$.
	
		Take two points as in the hypothesis and assume that neither $y_1$ or $y_2$ is a wormhole level. 

        If $x_1=x_2$ then the two points are one on the vertical of the other. A path to join them is then simply $\pi(\gamma(t))$, where $\gamma(t):[0,1]\to \C{F}\times I$ and $\gamma([0,1])$ is a line segments that connects $\pi^{-1}(x)$ to $\pi^{-1}(y)$\footnote{Those are well defined points in $\C{F}\times I$ since we already assumed that $y_1$ and $y_2$ are not wormhole levels}. 

        If the two points are not one on the vertical of the other, i.e. if $x_1\neq x_2$, then we can proceed with the following algorithm:
	\begin{enumerate}[(1)]
		\item
                Call $q_0=(x_1,y_1)\in\C{F}\times I$ and define $i_1$ to be the smallest $i\in\B{N}$ such that $(x_1)_i\neq (x_2)_i$. Notice that $x_2$ will remain fixed through the algorithm, while $x_1$ will change after every iteration.
		\item
                Choose the wormhole level of order $i_1$ closest to $y_1$ (it can be either above or below height $y_1$). Let us say it is $\omega^1= \omega_{i_1}(n_1,\dots,n_{i_1})$.
		\item
                Define a line segment in $\C{F}\times I$ that starts from $q_0$ and ends in $(x_1,\omega^1)$. One way of doing this is as the image of the linear function $\gamma_1:[0,1]\to\C{F}\times I$ defined as $\gamma_1(t) = (x_1, y_1+t(\omega^1-y_1))$. Since $\omega^1$ is a wormhole level of order $i_1$, the point $(x_1,\omega^1)$ will be identified by the map $\pi$ with the point $q_1:=(x_1',\omega^1)$, where $x_1'=\nu^{i_1}(x_1)$.
		\item 
                If $x_1'\neq x_2$, then restart the algorithm by replacing $q_0$ with $q_1$ and $x_1$ with $x_1'$ in step (1), finding $\omega^2$ in step (2) and defining $\gamma_2(t)$ and $q_2$ in step (3). More in general, at the $j$-th iteration, we look for a wormhole of level $i_j$, a linear function $\gamma_j$ that corresponds to a line segment connecting $(x_1^{(j-1)},\omega^{j-1})$ and $(x_1^{(j-1)},\omega^j)$ and a point $q_j:=(x_1^{(j)},\omega^j)$.
        \end{enumerate}
        At this point we have two possible scenario:
        \begin{itemize}
            \item
                if $x_1\parallel x_2$, then the algorithm will stop after a finite number of iterations. 
            \item 
                if $x_1\nparallel x_2$, then the algorithm will go on indefinitely.
        \end{itemize}

        \noindent \emph{Case 1):} Let us assume that the algorithm stops after $k-1$ iterations, i.e. $x_1^{(k-1)}=x_2$. At this point we are on the vertical of $(x_2,y_2)$ (notice that we cannot have reached it, because all the points $(x_1^{(j)},\omega^j)$ are wormholes, while $(x_2,y_2)$ is not). We then need a last line segment, connecting $q_{k-1}=(x_2,\omega^{k-1})$ and $(x_2,y_2)$. It will correspond to the linear function $\gamma_k(t)$.

        Now that we have all the linear functions $\gamma_i$ for $i=1,\dots k$ we define
		\begin{equation*}
			\Gamma(t):=  \begin{cases}
                                \gamma_1(kt)& \text{if } t\in[0,\frac{1}{k}]\\
                                \vdots & \\
                                \gamma_j(kt-j+1)& \text{if } t\in[\frac{j-1}{k},\frac{j}{k}]\\
                                \vdots & \\
                                \gamma_k(kt-k+1)& \text{if } t\in[\frac{k-1}{k},1].
	                       \end{cases}
		\end{equation*}
        Notice that $\Gamma:[0,1]\to\C{F}\times I$ is such that $\Gamma(0)=x$ and $\Gamma(1)=y$. Moreover $\Gamma([0,1])\subset\C{F}\times I$ is a finite union of (closed) line segments, hence we are missing only the continuity. 
	\begin{claim}
            $p(t):=\pi(\Gamma(t))$ is continuous.
	\end{claim}
	
	\emph{Proof of Claim:}
            By construction each $\gamma_j$ is continuous, hence, since $\pi$ does nothing to the vertical coordinate, we have to check $p(t)$ only for the values of $t$ in which the paths are joined, namely $t=\frac{j}{k}$ for $i=1,\dots,k-1$. From how the $\gamma_j$ were constructed we get that, for each $j=1,\dots,k-1$:
		\[
                \lim_{t\to \frac{j}{k}^+}p(t)=\lim_{t\to \frac{j}{k}^+}\pi(\Gamma(t)) = \lim_{t\to \frac{j}{k}^+}\pi(\gamma_{j+1}(kt-j))=\pi(\gamma_{j+1}(0))=\pi((x_1^{(j-1)},\omega^j))
		\]
		and also
		\[
                \lim_{t\to \frac{kj}{k}^-}p(t)=\lim_{t\to \frac{j}{k}^-}\pi(\Gamma(t)) = \lim_{t\to 	\frac{j}{k}^-}\pi(\gamma_j(kt-j+1))=\pi(\gamma_j(1))=\pi((x_1^{(j)},\omega^j)).
		\]
            The continuity follows from the fact that, since $\omega^j$ is a wormhole of level $j$ and $\nu^{i_j}(x_1^{(j-1)})=x_1^{(j)}$, then $\pi((x_1^{(j-1)},\omega^j)) = \pi((x_1^{(j)},\omega^j))$. This proves the claim and the first case. \hfill
        \qedsymbol\medskip

        \noindent \emph{Case 2):} The intuition in the case $x_1\nparallel x_2$ suggests that our path will be the image under the map $\pi$ of a countable number of line segments.

        The algorithm proceeds exactly as before, with the exception that it never stops. What happens instead is that it produces an infinite countable number of line segments in $\C{F}\times I$ whose end points form a sequence $\{ (x_1^{(j)},\omega^j) \}_{j\in\C{J}}\subset\C{F}\times I$ with $\C{J}=\{j_h\}_{h\in\B{N}}\subset\B{N}$.
		
	\begin{claim}\label{claim_infinite}
            The end points of the line segments $\gamma_j$ are converging to $(x_2,\overline{\omega})\in\C{F}\times I$.
	\end{claim}
	
	\emph{Proof of Claim:}
            The fact that $x_1^{(j)}\to x_2$ as $j\to\infty$ is clear from how the $x_1^{(j)}$ are defined, i.e. by changing the string $x_1$, one entry at a time, to make it coincide with $x_2$. 
		
            To show the convergence of the second coordinate we recall how wormhole levels were defined, and in particular \eqref{omega}. At the $k$-th iteration of the algorithm we are adding the term $n_{j_k} \prod_{h=1}^k m_h^{-1}$, with $n_{j_k}\in\{1,\dots,m_k-1\}$. Hence, since $m_h\geq 2$ for each $h$, we can apply the Cauchy criterion to see that the sequence $(\omega^j)_j$ is converging to some $\overline{\omega}\in [0,1]$.
        \hfill \qedsymbol
		
        We now define a line segment with extremes $(x_2,\overline{\omega})$ and $(x_2,y_2)$ that correspond to a function $\gamma_{\infty}:[0,1]\to\C{F}\times I$. Once we have all the $\gamma_j$ we define
	\begin{equation*}
		\Gamma(t):=
		\begin{cases}
			\gamma_1(4t)& \text{if } t\in[0,\nicefrac{1}{4}]\\
			\gamma_2(8t-2)& \text{if } t\in[\nicefrac{1}{4},\nicefrac{1}{4}+\nicefrac{1}{8}]\\
			\vdots & \\
                \gamma_j(2^{j+1}t-\sum_{s=1}^{j-1}2^s)& \text{if } 	t\in\left[\sum_{s=1}^{j-1}\nicefrac{1}{2^{s+1}},\sum_{s=1}^j\nicefrac{1}{2^{s+1}}\right]\\
			\vdots & \\
			\gamma_{\infty}(2t-1)& \text{if } t\in[\nicefrac{1}{2},1].
		\end{cases}
	\end{equation*}
        The re-parametrization used here is taking into account the fact that the later we encounter a line segments, the shorter that line segment will be. This is because wormhole levels of higher order are closer to each other. Clearly this is only one of the many possible ways to parametrize $\Gamma(t)$.
		
        Notice that this time $\Gamma([0,1])$ is a countable collection of line segments in $\C{F}\times I$.
		
	\begin{claim}
            $p(t):=\pi(\Gamma(t))$ is a continuous path in $\C{L}$ that joins $x$ and $y$.
	\end{claim}
		
	\emph{Proof of Claim:}
            Clearly $p(0) =\pi(\gamma_1(0))=x$ and $p(1)=\pi(\gamma_{\infty}(1))=y$, hence we are left to check the continuity. The proof is the same as in the finite case except for the point $t=\frac{1}{2}$, that correspond to the special path $\gamma_{\infty}(t)$. To prove continuity at that point we first observe that
		\[
                \lim_{t\to \frac{1}{2}^+}p(t)=\lim_{t\to \frac{1}{2}^+}\pi(\Gamma(t)) = \lim_{t\to \frac{1}{2}^+}\pi(\gamma_{\infty}(2t-1))=\pi(\gamma_{\infty}(0))=\pi((x_2,\overline{\omega}))=[x_2,\overline{\omega}].
		\]
		For the other direction we get
		\begin{align*}
                \lim_{t\to \frac{1}{2}^-}p(t)&=\lim_{t\to \frac{1}{2}^-}\pi(\Gamma(t)) =\lim_{j\to\infty}\left( \lim_{t\to T(j)} \pi\left( \gamma_j(2^{j+1}t-\sum_{s=1}^{j-1}2^s) \right) \right)\\
                & =\lim_{j\to\infty}(\pi(\gamma_j(1))) =\lim_{j\to\infty}[x_1^{(j)},\omega^j] = [x_2,\overline{\omega}]			
		\end{align*}
            where $T(j) = \sum_{s=1}^j\nicefrac{1}{2^{s+1}}$ comes from the definition of $\Gamma(t)$. This concludes the proof of the claim. \hfill \qedsymbol
			
        To finish the proof of the proposition we are left to see what happen when $y_1$ and/or $y_2$ are wormhole level. Suppose $y_1$ is a wormhole level and assume in particular that it is of order $m$, the other cases are similar. Then we can find $x_1$ and $\hat{x}_1=\nu^m(x_1)$ such that $\pi^{-1}([x_1,y_1])=\{(x_1,y_1),(\hat{x}_1,y_1)\}$. We then choose as $q_0$ one of the two and we do it in such a way that $(x_1)_m$ (or $(\hat{x}_1)_m$) is equal to $(x_2)_m$. With this choice the algorithm then proceeds as in the cases presented above.
    \end{proof}
	
    \begin{remark}\label{remark_order}
        There is no need to take the wormhole in the order dictated by the proof. The order used in the algorithm makes sure that no wormhole is missed, however in doing so there is no guarantee that the resulting path is the shortest (see the next section for an example in this sense).
    \end{remark}

    \begin{remark}
        We can define the functions $\gamma_j(t)$ in various different ways. However, in order to keep the computations simple, we will always take them to be injective.
    \end{remark}
 
    As a natural consequence of Propositions \ref{proppath} we get the following
	
    \begin{corollary}
        $\C{L}$ is a path-connected metric space.
    \end{corollary}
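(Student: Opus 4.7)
The corollary contains two claims: that $(\C{L},d)$ is a metric space (so far $d$ has only been defined as an infimum and the axioms still need verification) and that $\C{L}$ is path-connected. The path-connectedness is immediate from Proposition \ref{proppath}: for any two distinct $x,y\in\C{L}$ the proposition supplies a continuous path joining them, and for $x=y$ the constant map works.

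For the metric axioms I would check each in turn. Symmetry: reversing the parametrization of any admissible $\Gamma$ yields another admissible path with the same $\C{H}^1$-measure. Triangle inequality: the union $\Gamma_1\cup\Gamma_2$ of admissible paths joining $x$ to $z$ and $z$ to $y$ is admissible for $(x,y)$ with $\C{H}^1$-measure at most $\C{H}^1(\Gamma_1)+\C{H}^1(\Gamma_2)$. Finiteness of $d$ follows from a length estimate on the $\Gamma$ produced in Proposition \ref{proppath}: Case 1 gives a finite union of vertical segments of total length at most $1+$ (bounded by the number of iterations), while in Case 2 the algorithm picks the wormhole of order $i_j$ nearest to the current height, so by \eqref{omega} and Lemma \ref{nested_wormhole} one has $|\omega^j-\omega^{j-1}|\le C\prod_{h=1}^{i_j}m_h^{-1}\le C\,2^{-i_j}$ (using $m_h\ge 2$); since the indices $i_j$ are strictly increasing, the total length is dominated by a convergent geometric series.

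The delicate axiom is non-degeneracy: $d(x,y)=0\Rightarrow x=y$. Here I would exploit the height function $h:\C{L}\to[0,1]$. Any admissible $\Gamma\subset\C{F}\times I$ whose $\pi$-image joins $x$ and $y$ projects vertically onto a connected subset of $[0,1]$ containing $h(x)$ and $h(y)$, and its $\C{H}^1$-measure dominates the length of that projection; hence $d(x,y)\ge|h(x)-h(y)|$, which disposes of the case $h(x)\ne h(y)$. If $h(x)=h(y)=:t$ but $x\ne y$, then any pair of representative $\C{F}$-coordinates must disagree in some first entry $n$, and altering that entry along a path requires a wormhole jump of order exactly $n$; the disjointness of the images of the $\omega_k$ guarantees that $t$ cannot simultaneously be a wormhole of every order needed to identify $x$ with $y$. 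Consequently there is a positive vertical distance $\delta>0$ from $t$ to the nearest usable wormhole, and any admissible $\Gamma$ must ascend at least $\delta$ and return to height $t$, giving $d(x,y)\ge 2\delta>0$.

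The main obstacle is precisely the non-degeneracy step, which requires translating the discrete combinatorial structure of the wormhole identifications into a quantitative vertical lower bound, with attention to the case where $h(x)$ itself lies in some $J_k$ and to how this interacts with the equivalence classes under $\sim$. Everything else reduces to routine bookkeeping around Proposition \ref{proppath}.
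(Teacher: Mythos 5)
Your proposal is correct, but it does considerably more than the paper, which offers no proof at all: the corollary is presented there as an immediate consequence of Proposition \ref{proppath} (path-connectedness), and the fact that \eqref{def_dist} defines a metric is taken for granted. Your treatment of path-connectedness coincides with the paper's. The added verification of the metric axioms is sound: symmetry and the triangle inequality are the routine reversal and concatenation arguments, and your finiteness estimate matches the remark after Definition \ref{def:length} (the $j$-th segment has length comparable to $s^{-j}$, resp.\ $\prod_{h\le i_j} m_h^{-1}\le 2^{-i_j}$, so the total length is a convergent series). Your non-degeneracy argument in fact anticipates machinery the paper only develops later: the bound $d(x,y)\ge |h(x)-h(y)|$ is exactly inequality \eqref{eq:inequalitydistance}, established inside Proposition \ref{prop_distancemonotone}, and the equal-heights case ($h(x)=h(y)=t$, $x\ne y$) is handled by the same ``a needed wormhole order has no representative at height $t$, so the path must ascend and return'' reasoning that drives Theorem \ref{maintheorem}. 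The one step you take on faith --- that changing the $n$-th digit of the $\C{F}$-coordinate forces the lift to visit a height in $J_n$ --- is precisely the combinatorial fact the paper itself uses without proof in Definition \ref{def_minimalinterval} and in the proof of Theorem \ref{maintheorem}, so you are at the same level of rigour as the source while being strictly more complete on the metric-space claim.
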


    Later we will prove the stronger fact that $\C{L}$ is also a geodesic metric space, meaning that any two points can be connected by a path of minimal length.

\subsection{Geodesics}\ \\

    Any path $p\subset\C{L}$ comes naturally with a lift, i.e. a set $\Gamma\subset\C{F}\times I$ such that $\pi(\Gamma)=p$. Such a lift is, in principle, not unique. However, once a lift has been specified, we can use the notation $(p,\Gamma)$ to refer to the path and it becomes possible to define its length.

    \begin{definition}\label{def:length}
        The length of a path $(p,\Gamma)$ is defined as
        $$l(p,\Gamma):=\C{H}^1(\Gamma).$$
        Moreover, if $(p,\Gamma)$ is constructed as in Proposition \ref{proppath}, we also have that $$\C{H}^1(\Gamma)=\sum |h(\gamma_j(1))-h(\gamma_j(0))|$$ where the sum ranges over all the $\gamma_j$ that are used to define $\Gamma$.
    \end{definition}

    \begin{remark}
        Notice that the summation will always converge. Indeed the term $|h(\gamma_j(1))-h(\gamma_j(0))|$ is comparable to $\frac{1}{s^j}$, i.e. the summation is comparable to the series $\sum_{i\in\B{N}} \frac{1}{s^i}$, which converges since $s>2$.
    \end{remark}
    
    As already observed by Laakso in his paper, there could exist paths whose lift is a totally disconnected set in $\C{F}\times I$. In his paper he said that it is possible to ignore these kind of path, but he did not explain why or how. In the following we are going to show that, in order to compute the distance among two points connected by a monotone path, it suffice to consider paths constructed as in Proposition \ref{proppath} (upon rearrangement, as explained in Remark \ref{remark_order}). Later in the paper we will also show that this is true for every pair of points in $\C{L}$, dropping the monotonicity assumption.

    Recall that, from Proposition \ref{proppath}, $\Gamma([0,1])=\{\gamma_j([a_j,b_j])\}_j\subset\C{F}\times I$ is a family of line segments, where the intervals $[a_j,b_j]$ comes from on of the re-parametrizations in the proof. From now on, in order to ease the notation and when there is no risk of confusion, we will simply write $\Gamma$ or $\Gamma=\{\gamma_j\}_j$ to indicate the family of line segments $\Gamma([0,1])$. 

    We now clarify some intuitions from \cite{laakso2000ahlfors} about the behaviour of paths.
	
    For a path $(p,\Gamma))$ where $\Gamma=\gamma([0,1])\subset\C{F}\times I$ is just a segment, the meaning of upward and downward is clear: if $h(\pi(\gamma(t_1)))<h(\pi(\gamma(t_2)))$ whenever $t_1<t_2$ then $(p,\Gamma)$ \emph{goes upward} and vice versa for the definition of downward. The delicate part is when the path in $\C{L}$ is the image of multiple line segment in $\C{F}\times I$ connected via wormholes.

    \begin{definition}\label{defpassaggio}
        Take a Laakso space of dimension $Q=1+\log_s 2$ and let $(p,\Gamma)$ be a path in $\C{L}$. We say that $(p,\Gamma)$ \emph{passes (or jumps) through a wormhole level of depth} $k$ if there exists $q\in\C{L}$ such that $\pi^{-1}(q)=\{(x_1,y_1),(x_2,y_2)\}\subset\C{F}\times I$ and:
	\begin{enumerate}[(1)]
            \item $|x_1-x_2| =  s^{1-k}\frac{s-1}{s}$,
		\item $y_1 = y_2 = \omega_k(n_1,\dots,n_k)$ is a wormhole level of depth $k$,
		\item $\exists j$ such that $\gamma_j(1) = (x_1,y_1)$ and $\gamma_{j+1}(0) = (x_2,y_2)$.
	\end{enumerate}
	Moreover we say that:
	\begin{enumerate}[(1)]\setcounter{enumi}{3}
            \item $(p,\Gamma)$ passes through $q$ \emph{going upward} if $h(\gamma_j(0))<y_1<h(\gamma_{j+1}(1))$,
            \item $(p,\Gamma)$ passes through $q$ \emph{going downward} if $h(\gamma_j(0))>y_1>h(\gamma_{j+1}(1))$,
		\item $(p,\Gamma)$ \emph{makes an inversion} at $q$ in the other cases.
	\end{enumerate}
        The path $(p,\Gamma)$ \emph{goes only upward} (or only downward) if, for each wormhole $q$ in which it passes, it does so by going upward (or downward). Such paths are also called \emph{monotone}. Finally, if $(p,\Gamma)$ is not monotone, then we call it \emph{oscillating}.
    \end{definition}

    Before discussing the main result of this section, we now give another formal definition of an idea already present in \cite{laakso2000ahlfors}.
	
    \begin{definition}\label{def_minimalinterval}
	Let $x,y\in\C{L}$. An interval $[a,b]\subseteq[0,1]$ such that:
	\begin{enumerate}[(1)]
		\item $h(x),h(y)\in[a,b]$,
            \item $[a,b]\cap J_N\neq\emptyset$ for each $N\in\B{N}$ such that a wormhole of level $N$ is required to connect $x$ to $y$,
            \item if $[a',b']\subseteq[0,1]$ is another interval that satisfies properties (1) and (2), then $b-a\leq b'-a'$,
	\end{enumerate}
	is called \emph{minimal height interval} (or simply \emph{minimal interval}) for $x$ and $y$.
    \end{definition}
	
    Monotone paths and minimal intervals are related by the following
	
    \begin{lemma}\label{lemma:monotoneminimal}
        Two points $x,y\in\C{L}$ can be connected with a monotone path if and only if their minimal interval is $[h(x),h(y)]$ (or $[h(y),h(x)]$ if $h(x)>h(y)$).
    \end{lemma}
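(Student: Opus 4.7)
I write $x=[x_1,h(x)]$ and $y=[x_2,h(y)]$ for fixed representatives and assume without loss of generality that $h(x)\leq h(y)$. Let $S\subseteq\B{N}$ be the set of indices $N$ at which the strings $x_1$ and $x_2$ differ; since a wormhole of order $k$ modifies only the $k$-th entry of the $\C{F}$-coordinate while vertical segments leave it fixed, $S$ is precisely the set of orders at which a wormhole jump is unavoidable along any path from $x$ to $y$.

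For the forward direction, suppose a monotone path $(p,\Gamma)$ connects $x$ to $y$, going (WLOG) only upward. Since $\pi$ preserves the height coordinate and Definition \ref{defpassaggio} forces every vertical segment of $\Gamma$ to have strictly increasing height, the composition $h\circ p$ is continuous and non-decreasing with image $[h(x),h(y)]$; in particular every wormhole height used by $\Gamma$ lies in $[h(x),h(y)]$, and at least one wormhole of order $N$ is used for each $N\in S$. Hence $[h(x),h(y)]$ satisfies conditions (1) and (2) of Definition \ref{def_minimalinterval}. Any interval that contains both $h(x)$ and $h(y)$ has length at least $h(y)-h(x)$, so $[h(x),h(y)]$ is minimal.

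For the converse, suppose $[h(x),h(y)]$ is a minimal interval. Property (2) lets me choose, for every $N\in S$, a wormhole level $\omega^N\in J_N\cap[h(x),h(y)]$; the images of the functions $\omega_k$ are pairwise disjoint for distinct $k$, so the chosen heights are all distinct, and I enumerate them in strictly increasing order $\omega^{(1)}<\omega^{(2)}<\cdots$. Starting from $(x_1,h(x))$ I build $\Gamma$ by climbing vertically to $\omega^{(1)}$, performing the identification (which replaces the $\C{F}$-coordinate by its image under the switching function of the appropriate order), climbing to $\omega^{(2)}$, performing the next identification, and so on. Because the jumps concern pairwise distinct digits, after applying all of them the $\C{F}$-coordinate agrees with $x_2$ on every index of $S$ and is unchanged elsewhere, giving $x_2$ exactly. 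If $S$ is finite I close off with a last vertical segment to $(x_2,h(y))$, as in Case 1 of Proposition \ref{proppath}. Because the height strictly increases on every vertical segment and is fixed at each wormhole, and because every wormhole is entered from below and left upward, $(p,\Gamma)$ is monotone upward in the sense of Definition \ref{defpassaggio}.

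The only delicate step is the case $|S|=\infty$, which I handle by mimicking Case 2 of Proposition \ref{proppath}: the sequence $\omega^{(j)}$ is monotone and bounded, hence converges to some $\bar\omega\leq h(y)$; and because $S\cap\{1,\dots,K\}$ is finite for every $K$, the sequence of $\C{F}$-coordinates at the jumps eventually agrees with $x_2$ on every prescribed prefix, hence converges to $x_2$ in $\C{F}$. This produces an accumulation point $[x_2,\bar\omega]$ through which I extend the path continuously by a final vertical segment to $(x_2,h(y))$. Verifying continuity and monotonicity at $\bar\omega$ is essentially the Cauchy-type estimate behind Claim \ref{claim_infinite}, and is the only step that requires genuine care.
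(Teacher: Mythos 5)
Your forward direction is essentially the paper's argument and is fine. The genuine gap is in the converse, in the case where infinitely many wormhole orders are needed. You choose, for each needed order $N$, an \emph{arbitrary} wormhole level $\omega^N\in J_N\cap[h(x),h(y)]$ and then propose to ``enumerate them in strictly increasing order $\omega^{(1)}<\omega^{(2)}<\cdots$''. An arbitrary countably infinite subset of $[h(x),h(y)]$ need not admit such an enumeration: with an unlucky choice the selected heights can contain an infinite strictly decreasing chain (so there is no smallest element to start from), or more generally have order type different from $\omega$ --- they can even be densely ordered, or have several accumulation points, in which case your single limit $\bar\omega$ and single closing vertical segment do not exist. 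The sentence ``the sequence $\omega^{(j)}$ is monotone and bounded, hence converges to some $\bar\omega$'' therefore presupposes exactly what has to be arranged by hand. (Your finite case, and the observation that switches of distinct digits commute so the order of the jumps does not affect the final $\C{F}$-coordinate, are both fine.)

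The paper avoids this by not choosing the wormholes independently. It first fixes wormholes $\omega^1<\omega^2$ in $[h(x),h(y)]$ of the two \emph{smallest} needed orders $j_1<j_2$, and then applies Lemma \ref{nested_wormhole} repeatedly: between any two distinct wormholes of orders $N_1\le N_2$ there is a wormhole of every order $M>N_2$, so each subsequent needed wormhole can be placed strictly between the last one used and the fixed upper anchor $\omega^2$. This produces, by construction, an increasing sequence of jump heights with a single limit at most $\omega^2$, after which one vertical segment reaches $[x_2,\omega^2]$ and a final one reaches $y$. The two-anchor setup matters: Lemma \ref{nested_wormhole} interpolates between two \emph{wormholes}, not between a wormhole and an arbitrary height such as $h(y)$, so you cannot simply demand that each new wormhole lie ``above the previous one and below $h(y)$'' without this device. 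If you replace your free choice of the $\omega^N$ by this nested selection, the rest of your construction and the continuity check at $\bar\omega$ go through as you describe.
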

	
    \begin{proof}
        Suppose that $x=[x_1,y_1]$ and $y=[x_2,y_2]$, with $h(x)<h(y)$, are connected by a monotone path. Then for any wormhole level needed to connect them there is at least one wormhole of that level between heights $h(x)$ and $h(y)$. This means that the interval $[h(x),h(y)]$ satisfies (1) and (2) of Definition \ref{def_minimalinterval}. Since any other interval that satisfies (1) and (2) must contain $h(x)$ and $h(y)$, we have that (3) is also satisfied, which implies that $[h(x),h(y)]$ is a minimal interval as requested.
		
        On the other hand let us suppose that the minimal interval for $x,y\in\C{L}$ is $[h(x),h(y)]$. Let $j_1$ and $j_2$ with $j_1<j_2$ be the two smallest order of wormhole levels needed to connect $x$ to $y$. Since $[h(x),h(y)]$ is a minimal interval, then there exist both $\omega^1\in [h(x),h(y)]\cap J_{j_1}$ and $\omega^2\in[h(x),h(y)]\cap J_{j_2}$. Let us further assume that $\omega^1<\omega^2$, the other case is identical. Then we can construct a path that connects $x$ and $y$ in the following way:
	\begin{enumerate}[(1)]
		\item We connect $x$ to $x'=[x_1,\omega^1]$ with a single, upward going line segment.
            \item We use Proposition \ref{proppath} to connect $x'$ to $y'=[x_2,\omega^2]$ with a monotone path. This is possible because, by repeatedly applying Lemma \ref{nested_wormhole}, we can take the jumps in increasing order.
		\item We connect $y'$ to $y$ with a single, upward going, line segment.
	\end{enumerate}
        The concatenation of these paths is a monotone, upward going path that connects $x$ to $y$ as required.
	\end{proof} 
 
    We are now able to prove a formula for the length of a monotone path. Then we will prove \cite[Proposition 1.1]{laakso2000ahlfors}, showing the general form of a geodesic in $\C{L}$. Finally, we will show how some of the intermediate results will combine to prove \cite[Proposition 1.2]{laakso2000ahlfors}. 
	
    \begin{proposition}\label{prop_distancemonotone}
        Let $x=[x_1,y_1]$ and $y=[x_2,y_2]$ be two distinct points in $\C{L}$ that can be connected by a monotone path. Then $d(x,y)=|h(y)-h(x)|$.
    \end{proposition}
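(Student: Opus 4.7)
The plan is to prove the two inequalities $d(x,y)\leq |h(y)-h(x)|$ and $d(x,y)\geq |h(y)-h(x)|$ separately. Without loss of generality assume $h(x)\leq h(y)$.

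For the lower bound, I would exploit that the identification map $\pi$ preserves the $I$-coordinate, so for any path $(p,\Gamma)$ joining $x$ and $y$ we have $h\circ p = \pi_I\circ\Gamma$, where $\pi_I:\C{F}\times I\to I$ is the projection onto the second factor. Since $\C{F}\times I$ carries the Euclidean metric from $\B{R}^2$, the map $\pi_I$ is $1$-Lipschitz, and the Hausdorff measure bound $\C{H}^1(\pi_I(\Gamma))\leq\C{H}^1(\Gamma)$ applies. Because $p:[0,1]\to\C{L}$ is continuous and $h$ is continuous on $\C{L}$, the image $h(p([0,1]))$ is a connected subset of $[0,1]$ containing both $h(x)$ and $h(y)$, hence contains $[h(x),h(y)]$. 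Therefore $\C{H}^1(\Gamma)\geq\C{H}^1(\pi_I(\Gamma))\geq h(y)-h(x)$, and taking the infimum over all paths gives $d(x,y)\geq h(y)-h(x)$.

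For the upper bound I would invoke Lemma \ref{lemma:monotoneminimal}: since $x,y$ admit a monotone connection, their minimal interval is exactly $[h(x),h(y)]$, so the three-step construction described there produces a monotone upward-going path $(p,\Gamma)$ using only wormhole levels inside $[h(x),h(y)]$. Writing $\Gamma=\{\gamma_j\}_j$, each $\gamma_j$ is a vertical segment with $h(\gamma_j(0))<h(\gamma_j(1))$, and at each wormhole passage $h(\gamma_j(1))=h(\gamma_{j+1}(0))$ because the identifications preserve height. Applying Definition \ref{def:length}, the length of $(p,\Gamma)$ equals
\[
    \C{H}^1(\Gamma)=\sum_j\bigl(h(\gamma_j(1))-h(\gamma_j(0))\bigr),
\]
which telescopes to $h(y)-h(x)$. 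Combined with the lower bound this gives equality.

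The main subtlety I expect is handling the countable case of the telescoping cleanly. In that situation the partial sums equal $h(\gamma_N(1))-h(x)$ and, by Claim \ref{claim_infinite}, $h(\gamma_N(1))\to\overline{\omega}$ for some $\overline{\omega}\in[h(x),h(y)]$ (monotonicity plus the fact that the chosen wormhole levels stay in $[h(x),h(y)]$ is what confines $\overline{\omega}$ to this interval); there is then a final segment $\gamma_\infty$ from $(x_2,\overline{\omega})$ to $(x_2,h(y))$, also upward-going, contributing $h(y)-\overline{\omega}$, so the series sums to $h(y)-h(x)$. A secondary point worth noting is that the lower-bound argument does not use that $\Gamma$ be a union of vertical segments, so it covers the potentially disconnected lifts mentioned before Definition \ref{def:length} as well, which is why the infimum in \eqref{def_dist} can indeed be achieved at the value $|h(y)-h(x)|$.
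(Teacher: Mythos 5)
Your proposal is correct and follows essentially the same route as the paper: the lower bound via the fact that $h(p)$ must contain $[h(x),h(y)]$ together with the $1$-Lipschitz projection onto the $I$-coordinate, and the upper bound via Lemma \ref{lemma:monotoneminimal} and the telescoping sum over the vertical segments of a monotone path built as in Proposition \ref{proppath}. Your extra care with the countably infinite telescoping and with justifying $\C{H}^1(h(p))\leq\C{H}^1(\Gamma)$ only makes explicit steps the paper leaves implicit.
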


    \begin{proof}
        We prove the proposition in the case $h(x)<h(y)$, the other case is identical. 
        
        First we observe that, for any path $(p,\Gamma)$ that connects $x$ and $y$, we must have that $[h(x),h(y)]\subseteq h(p)$. Hence
        \begin{equation*}
            |h(y)-h(x)|=\C{H}^1([h(x),h(y)])\leq\C{H}^1(h(p))
        \end{equation*}
        from which we infer that
        \begin{equation}\label{eq:inequalitydistance}
            |h(y)-h(x)|\leq d(x,y).
        \end{equation}
        Notice that, in order to prove this inequality, we did not used the hypothesis of the Proposition. Indeed \eqref{eq:inequalitydistance} holds for any two points $x,y\in\C{L}$, not only for those that can be connected by monotone paths.
        
        To prove the reverse inequality we observe that, thanks to Lemma \ref{lemma:monotoneminimal}, since we are able to connect $x$ and $y$ with a monotone interval, all the wormholes needed to connect them are between heights $h(x)$ and $h(y)$. Hence, by using the algorithm in Proposition \ref{proppath}, we can construct a monotone path $(p,\Gamma)$ with $\Gamma = \{\gamma_j\}_j$. For this path we have $$d(x,y) \leq l(p,\Gamma)= \sum|h(\gamma_j(1))-h(\gamma_j(0))| = |h(y)-h(x)|$$
        where the first equality comes from Definition \ref{def:length} and the second one from resolving the telescopic sum. This, together with \eqref{eq:inequalitydistance}, concludes the proof.
    \end{proof}

    % \begin{corollary}
    %     For $x,y\in\C{L}$ as in the proposition we have that
    %     \[
    %     d_l(x,y)=d(x,y).
    %     \]
    % \end{corollary}

    \begin{remark}
        What the corollary is telling us is that, whenever we are looking at points that can be connected my monotone paths, then we can construct a geodesic by using the algorithm of Proposition \ref{proppath}. This means in particular that, as stated by Laakso in \cite{laakso2000ahlfors}, we can ignore those paths whose pre-image is a totally disconnected set in $\C{F}\times I$. At the end of this section we will be able to say the same thing for any two generic points in $\C{L}$.
    \end{remark}
	
    A crucial step to be able to prove \cite[Proposition 1.1]{laakso2000ahlfors} is to show that it is always possible to connect two points in $\C{L}$ by a path that makes at most two inversions.
	
    \begin{proposition}\label{prop_inversions}
        Let $x,y\in\C{L}$, Then there exists a path that connects $x$ and $y$ and makes at most two inversions. 
    \end{proposition}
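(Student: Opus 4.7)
The plan is to compare $[h(x),h(y)]$ with a minimal height interval $[a,b]$ for $x,y$ (Definition \ref{def_minimalinterval}) and split according to whether the minimal interval strictly extends below $\min(h(x),h(y))$, above $\max(h(x),h(y))$, both, or neither. Assume without loss of generality $h(x)\leq h(y)$, so $a\leq h(x)\leq h(y)\leq b$, giving four cases: $a=h(x)$ and $b=h(y)$; $a<h(x)$ and $b=h(y)$; $a=h(x)$ and $b>h(y)$; and $a<h(x)$ and $b>h(y)$.

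In the first case Lemma \ref{lemma:monotoneminimal} directly produces a monotone path from $x$ to $y$, giving zero inversions. For the mixed case $a<h(x)$, $b=h(y)$ I would first argue that minimality of $[a,b]$ forces $a$ to be a wormhole level of some required order $N_a$: if $a$ were not in any $J_N$ for a required $N$, one could slightly increase $a$ up to the next required wormhole level (which lies below $h(x)$) without violating condition (2) of Definition \ref{def_minimalinterval}, contradicting (3). Setting $\hat{x}_1=\nu^{N_a}(x_1)$, so that $(x_1,a)\sim(\hat{x}_1,a)$, I would then check that $[\hat{x}_1,a]$ and $y=[x_2,h(y)]$ admit $[a,h(y)]=[a,b]$ as their minimal interval: the indices where $\hat{x}_1$ and $x_2$ disagree are exactly those where $x_1$ and $x_2$ disagree minus $\{N_a\}$, so every required wormhole of this new pair still lies in $[a,b]$, and the interval cannot shrink because its endpoints must contain $a$ and $h(y)$. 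By Lemma \ref{lemma:monotoneminimal} there is a monotone upward path from $[\hat{x}_1,a]$ to $y$; concatenating the downward vertical segment at $x_1$ from $h(x)$ to $a$ with this monotone path produces the desired path from $x$ to $y$. The only change of direction occurs at the wormhole of order $N_a$ at height $a$, where the incoming segment descends in the $\C{F}$-coordinate $x_1$ and the outgoing one ascends in $\hat{x}_1$; this falls into case (6) of Definition \ref{defpassaggio} and counts as exactly one inversion.

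The case $a=h(x)$, $b>h(y)$ is handled by the mirror construction and again gives one inversion. For the remaining case $a<h(x)$, $b>h(y)$ the same reasoning applies at both endpoints: $a\in J_{N_a}$ and $b\in J_{N_b}$ for required orders $N_a,N_b$. Setting $\hat{x}_1=\nu^{N_a}(x_1)$ and $\tilde{x}_2=\nu^{N_b}(x_2)$, the intermediate points $[\hat{x}_1,a]$ and $[\tilde{x}_2,b]$ have heights exactly $a$ and $b$ and, by the same digit-flipping argument, their minimal interval is $[a,b]$, so Lemma \ref{lemma:monotoneminimal} joins them by a monotone upward path. The full path from $x$ to $y$ is then the concatenation of a downward vertical segment at $x_1$ from $h(x)$ to $a$, this monotone upward piece from $[\hat{x}_1,a]$ to $[\tilde{x}_2,b]$, and a downward vertical segment at $x_2$ from $b$ to $h(y)$, with inversions occurring precisely at the wormholes at heights $a$ and $b$, and none in the middle monotone piece.

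The main technical obstacle is establishing cleanly that whenever an endpoint of $[a,b]$ is strict it must coincide with a wormhole level of some required order, because this is what allows the direction change to be realised as a genuine wormhole passage and thus to count as an inversion in the sense of Definition \ref{defpassaggio}. A secondary, bookkeeping-level step is verifying that $[a,b]$ remains the minimal interval for the intermediate pairs $[\hat{x}_1,a],\ y$ and $[\hat{x}_1,a],\ [\tilde{x}_2,b]$, which hinges on the observation that applying $\nu^{N_a}$ or $\nu^{N_b}$ can only remove, never add, indices from the set of positions where the two strings disagree.
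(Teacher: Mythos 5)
Your construction arrives at the same path as the paper's proof of Proposition \ref{prop_inversions} --- descend from $x$ to the bottom of the minimal interval, cross it monotonically, then descend (or ascend) to $y$, with the direction changes at the two extremal wormholes accounting for the at most two inversions --- but you organize the argument the other way around. The paper does not presuppose the minimal interval: it orders the required wormhole orders as $n_1<n_2<\dots$, lets $n_j$ be the first with no wormhole in $[h(x),h(y)]$, extends to the nearest wormhole $\omega_j$ of that order, and uses Lemma \ref{nested_wormhole} to show that if any order is still missing it must be $n_{j+1}$, which one further extension $\omega_{j+1}$ on the opposite side captures; minimality of the resulting interval is only observed afterwards. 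By starting from Definition \ref{def_minimalinterval} you take on two obligations the paper discharges implicitly: that a minimal interval exists at all (a short compactness argument, since each constraint $[a,b]\cap J_N\neq\emptyset$ is closed because $J_N$ is finite), and your claim that a strict endpoint of $[a,b]$ must be a wormhole level of a required order.

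That last claim is where your justification is too quick. You propose to ``slightly increase $a$ up to the next required wormhole level,'' but when $x_1\nparallel x_2$ infinitely many orders are required, and wormholes of required orders then accumulate at every point of $[0,1]$ from above, so the ``next required wormhole level above $a$'' need not exist. The repair is exactly the observation behind the paper's Case 1/Case 2 split: consecutive points of $J_N$ are at distance at most $2\prod_{h=1}^{N}m_h^{-1}$, so for all sufficiently large $N$ the interval $[h(x),h(y)]$ (assuming $h(x)<h(y)$) already meets $J_N$; hence only finitely many required orders can force $a$ below $h(x)$, the binding constraint $a\le\max(J_N\cap[0,b])$ is attained by one of them, and that maximum is the wormhole of required order you need. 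With this step shored up, the rest of your argument is sound: the bookkeeping that $\nu^{N_a}$ and $\nu^{N_b}$ only delete indices of disagreement correctly shows that $[a,b]$ is still the minimal interval for the intermediate pairs, so Lemma \ref{lemma:monotoneminimal} applies, and your inversion count agrees with Definition \ref{defpassaggio}.
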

	
    \begin{proof}
        Take $x=[x_1,y_1]$ and $y=[x_2,y_2]$ in $\C{L}$ and suppose that $h(x)<h(y)$. Assume that $x$ and $y$ cannot be joined by a monotone path and define
	\[
            \C{J}:=\{n\in\B{N}\,|\,\text{a wormhole of level } n \text{ is needed to connect } x \text{ to } y\}.
	\]
        Order $\C{J}$ as $\C{J}=\{n_1,n_2,\dots\}$ with $n_1<n_2<\dots$ and let $j$ be the minimal index such that there are no wormholes of order $n_j$ in $[h(x),h(y)]$. Notice that this $j$ must exist because we assumed that the two points cannot be joined by a monotone path. Let $\omega_j$ be the wormhole level of order $n_j$ closest to $[h(x),h(y)]$ (choose one if there are two at the same distance). We further assume that $\omega_j<h(x)$. The other case is $\omega_j>h(y)$ (recall that we just escluded the possibility $h(x)\leq\omega_j\leq h(y)$) and the proof in that case is identical since we are only interested in inversions. We further distinguish two cases.
		
        \emph{Case 1:}
            All the wormhole levels needed to connect $x$ and $y$ are in $[\omega_j,h(y)]$. We show how to construct a path that connect $x$ and $y$ and makes only one inversion. First we connect $x$ to the point $[x_1,\omega_j]$. This can be done with a monotone downward going path $(p',\Gamma')$ with $\Gamma'$ a single line segment in $\C{F}\times I$. From there we go to $y$ with a monotone upward going path $(p'',\Gamma'')$, which is possible thanks to how we choose $\omega_j$ and to Corollary \ref{lemma:monotoneminimal}. It follows immediately that the path obtained by concatenation\footnote{By concatenating two paths we mean that we are defining, via re-parametrizations, a new continuous $\Gamma:[0,1]\to\C{F}\times I$ such that $\Gamma([0,1]) = \Gamma'([0,1])\cup\Gamma''([0,1])$.} connects $x$ to $y$ and makes only one inversion at $[x_1,\omega_j]$.
		
	\emph{Case 2:}
            There are still some wormhole levels needed to connect $x$ and $y$ that cannot be found in $[\omega_j,h(y)]$. If this is the case then one of these wormhole levels must be $n_{j+1}$. Indeed, if a wormhole of level $n_{j+1}$ is in the interval $[\omega_j,h(y)]$, then, by Lemma \ref{nested_wormhole}, in the same interval we can find a wormhole of level $m$ for every other $m>n_{j+1}$, which would take us back to case 1. Take $\omega_{j+1}$ to be the first wormhole level of order $n_{j+1}$ above $h(y)$, which exists thanks to Corollary \ref{lemma:monotoneminimal}. We show how to construct a path from $x$ to $y$ that makes only two inversions. First we connect $x$ to $[x_1,\omega_j]$ with a monotone downward going path, as in case 1. Then we connect $[x_1,\omega_j]$ to the point $[x_2,\omega_{j+1}]$. Since the minimal interval for this two points is $[\omega_j,\omega_{j+1}]$, it can be done with a monotone upward going path. Finally we connect $[x_2,\omega_{j+1}]$ to $y$ with a monotone path, which must be downward going (recall that $\omega_{j+1}>h(y)$). It follows immediately that the path obtained by concatenation connects $x$ to $y$ and makes only two inversions, at $[x_1,\omega_j]$ and at $[x_2,\omega_{j+1}]$, hence proving the proposition. 
    \end{proof}
	
    Notice that the intervals $[\omega_j,h(y)]$ and $[\omega_j,\omega_{j+1}]$ are, in their respective case, minimal interval for the points $x$ and $y$. This follows immediately from how the wormholes were chosen during the construction. 
	
    \begin{definition}
        A path $(p,\Gamma)$ constructed starting from a minimal interval as in the proof of Proposition \ref{prop_inversions} is called \emph{path associated to the minimal interval}.
    \end{definition}
	
    \begin{corollary}\label{coroll_length}
        Let $x,y\in\C{L}$ with $h(x)\leq h(y)$ and let $[a,b]$ be a minimal interval for these points. The length of a path $(p,\Gamma)$ associated to the minimal interval is given by
	\[
		l(p,\Gamma) = 2b -2a - h(y) + h(x).
	\]
        On the other hand, if $h(x)>h(y)$, then the length of a path $(p,\Gamma)$ associated to the minimal interval is given by
	\[
		l(p,\Gamma) = 2b -2a - h(x) + h(y).
	\]
    \end{corollary}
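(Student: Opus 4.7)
The plan is to decompose $(p,\Gamma)$ into its constituent monotone sub-paths and apply, to each piece, the length formula for monotone paths established in the proof of Proposition~\ref{prop_distancemonotone}: a monotone path constructed via the algorithm of Proposition~\ref{proppath} has length equal to the absolute difference of the heights of its two endpoints, by the telescoping identity spelled out in Definition~\ref{def:length}.

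First I would split into four cases based on how $[a,b]$ compares with $[h(x),h(y)]$, mirroring the construction in the proof of Proposition~\ref{prop_inversions}: (i) $a=h(x)$ and $b=h(y)$, in which case $(p,\Gamma)$ is already a single monotone sub-path; (ii) $a<h(x)=a'$ and $b=h(y)$, corresponding to Case~1 of Proposition~\ref{prop_inversions} with $\omega_j<h(x)$, so $(p,\Gamma)$ is a concatenation of a monotone descent from $x$ to $[x_1,a]$ and a monotone ascent to $y$; (iii) the symmetric situation $a=h(x)$ and $b>h(y)$, where a monotone ascent to $[x_1,b]$ is followed by a monotone descent to $y$; (iv) $a<h(x)$ and $b>h(y)$, the two-inversion case, where $(p,\Gamma)$ is a descent from $x$ to $[x_1,a]$, an ascent to $[x_2,b]$, and a descent to $y$.

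Having listed the sub-paths, I would sum their monotone lengths. For example, in case (iv) the three contributions are $h(x)-a$, $b-a$, and $b-h(y)$, which add to $2b-2a+h(x)-h(y)$; the remaining cases reduce to the same expression after a one-line simplification (e.g.\ in case (ii) one gets $(h(x)-a)+(b-a)=2b-2a+h(x)-h(y)$ after using $b=h(y)$). The case $h(x)>h(y)$ follows from what we have by interchanging the roles of $x$ and $y$.

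There is no genuine mathematical obstacle here; the argument is essentially bookkeeping. The only point to watch is to make sure that each monotone sub-piece identified above is itself produced by the algorithm of Proposition~\ref{proppath}, so that the telescoping length identity is available directly and no separate infimum computation is required.
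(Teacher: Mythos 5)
Your proposal is correct and follows essentially the same route as the paper: the paper's proof likewise decomposes the associated path into the (at most three) monotone pieces from Proposition~\ref{prop_inversions}, computes each length as a height difference, and sums, treating explicitly only the two-inversion case and leaving the degenerate cases as ``almost identical.'' Your explicit enumeration of the four cases is just a more complete bookkeeping of the same argument.
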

	
    \begin{proof}
        Let us suppose that the points are as in case 2 in the proof of Proposition \ref{prop_inversions}, which is the more general case. Let us also assume that $h(x)\leq h(y)$, the proof for the other case is almost identical with the role of $x$ and $y$ reversed. Then $a=\omega_j$ and $b=\omega_{j+1}$. The associated path $(p,\Gamma)$ is the concatenation of three monotone paths:
	\begin{enumerate}[(1)]
		\item the downward going path connecting $x$ to $[x_1,a]$, whose length is $h(x) - a$,
		\item the upward going path connecting $[x_1,a]$ to $[y_1,b]$, whose length is $b - a$,
		\item the downward going path connecting $[y_1,b]$ to $y$, whose length is $b - h(y)$.
	\end{enumerate}
	By adding together we get $l(p,\Gamma) = 2b - 2a + h(x) - h(y)$ as requested.
    \end{proof}

    We are now ready to conclude this section.
	
    \begin{theorem}[Restatement of Proposition 1.1 in \cite{laakso2000ahlfors}]\label{maintheorem}
        Let $x,y\in\C{L}$ and let $[a,b]$ be a minimal interval for $x$ and $y$. Then a path associated to $[a,b]$ is a geodesic.
    \end{theorem}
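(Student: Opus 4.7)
The plan is to obtain matching upper and lower bounds for $d(x,y)$. Set $L := 2(b-a) - |h(y) - h(x)|$. The upper bound $d(x,y) \leq L$ is immediate: by Proposition~\ref{prop_inversions} a path associated to $[a,b]$ exists, and by Corollary~\ref{coroll_length} such a path has length exactly $L$. The heart of the argument is the matching lower bound: for every path $(p,\Gamma)$ joining $x$ and $y$, one must show $\C{H}^1(\Gamma) \geq L$.

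The key idea is to push the path down to its height coordinate and invoke a total-variation argument. Fix any $(p,\Gamma)$ and consider the continuous function $h \circ p : [0,1] \to [0,1]$, which satisfies $(h \circ p)(0)=h(x)$ and $(h\circ p)(1)=h(y)$. Its image is a closed interval $[\alpha,\beta]$. Since $p$ connects $x$ and $y$, it must pass through at least one wormhole of each order $n$ in the set $\C{J}$ of wormhole orders required to connect $x$ and $y$; these wormholes lie at heights in $[\alpha,\beta]$. Thus $[\alpha,\beta]$ contains $h(x)$, $h(y)$ and at least one wormhole of each order in $\C{J}$, and by the minimality property (3) of Definition~\ref{def_minimalinterval} we conclude $\beta - \alpha \geq b - a$.

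Next, since $h \circ p$ is continuous, begins at $h(x)$, ends at $h(y)$, and attains both extremes $\alpha$ and $\beta$, an elementary optimization over walks on the real line from $h(x)$ that visit both $\alpha$ and $\beta$ before reaching $h(y)$ yields
$$ V(h \circ p) \;\geq\; 2(\beta - \alpha) - |h(y) - h(x)| \;\geq\; 2(b-a) - |h(y)-h(x)| \;=\; L.$$
For lifts $\Gamma = \bigcup_j \gamma_j$ of the canonical form of Proposition~\ref{proppath}, i.e.\ countable unions of vertical segments at pairwise distinct $\C{F}$-coordinates, Definition~\ref{def:length} gives $\C{H}^1(\Gamma) = \sum_j |h(\gamma_j(1)) - h(\gamma_j(0))|$, and this sum coincides with $V(h\circ p)$ because $h\circ p$ is piecewise monotone on each piece. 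Hence $\C{H}^1(\Gamma) \geq L$ for all such lifts.

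The main obstacle is that the infimum in \eqref{def_dist} ranges over arbitrary lifts, not merely the canonical ones from Proposition~\ref{proppath}. For a general $\Gamma$ one needs to check that either $\C{H}^1(\Gamma) = \infty$ (when $\Gamma$ is not a rectifiable union of vertical segments), or that $\Gamma$ can be replaced by a canonical lift of no larger $\C{H}^1$-measure; this is the content of Laakso's remark that paths whose lift is totally disconnected can be discarded, previewed in the comment after Proposition~\ref{prop_distancemonotone}. Once this reduction is granted, the bound $\C{H}^1(\Gamma) \geq L$ propagates to all lifts, giving $d(x,y) \geq L$, and combined with the upper bound we obtain $d(x,y) = L$. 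Therefore the associated path realizes $d(x,y)$ and is a geodesic.
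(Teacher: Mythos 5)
Your upper bound coincides with the paper's (Proposition~\ref{prop_inversions} plus Corollary~\ref{coroll_length}), but your lower-bound mechanism is genuinely different. The paper fixes an arbitrary competitor $(p',\Gamma')$, locates the points $\theta_j,\theta_{j+1}$ where it crosses wormhole levels of the two critical orders $n_j,n_{j+1}$, splits it into the three sub-paths $x\to\theta_j\to\theta_{j+1}\to y$, bounds each piece from below by the height difference of its endpoints via the general inequality \eqref{eq:inequalitydistance}, and then uses the choice of $a=\omega_j$, $b=\omega_{j+1}$ to turn the sum into $2(b-a)-|h(y)-h(x)|$. You instead work with the total variation of $h\circ p$ and with the full height-range $[\alpha,\beta]$ of the competitor, observing that $[\alpha,\beta]$ satisfies conditions (1)--(2) of Definition~\ref{def_minimalinterval} and so $\beta-\alpha\ge b-a$ by minimality. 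This is an attractive reformulation: it needs no case analysis on where the crossings sit, and it makes the role of Definition~\ref{def_minimalinterval}(3) completely transparent, whereas the paper has to justify the three separate inequalities $|h(x)-h(\theta_j)|\ge h(x)-a$, $|h(\theta_j)-h(\theta_{j+1})|\ge b-a$, $|h(\theta_{j+1})-h(y)|\ge b-h(y)$ by hand. Your elementary computation $V(h\circ p)\ge 2(\beta-\alpha)-|h(y)-h(x)|$ is correct.

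The genuine gap is the one you flagged yourself, and it is not cosmetic: $V(h\circ p)$ is a property of the \emph{parametrization}, while the infimum in \eqref{def_dist} is over the $\C{H}^1$-measure of the \emph{set} $\Gamma$. A parametrization that retraces part of $\Gamma$ (or a totally disconnected lift) can have $\C{H}^1(\Gamma)$ strictly smaller than $V(h\circ p)$, so the implication $V(h\circ p)\ge L\Rightarrow\C{H}^1(\Gamma)\ge L$ holds only for the canonical lifts of Proposition~\ref{proppath}, and the reduction of an arbitrary competitor to a canonical one is exactly what you leave unproved. The paper avoids needing this reduction: inequality \eqref{eq:inequalitydistance} is established for \emph{arbitrary} lifts (since $h$ is $1$-Lipschitz, $\C{H}^1(\Gamma)\ge\C{H}^1(h(\Gamma))\ge|h(y)-h(x)|$), so applying it to the three sub-paths bounds every competitor directly, with the only residual assumption being the additivity of $\C{H}^1$ over the three pieces. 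If you want to keep your formulation, replace $V(h\circ p)$ by the coarea-type bound $\C{H}^1(\Gamma)\ge\int_0^1 N(t)\,dt$, where $N(t)$ is the number of points of $\Gamma$ at height $t$, and argue that $N(t)\ge2$ for $t$ in $(\alpha,h(x))\cup(h(y),\beta)$ because the path must cross height $t$ on the way to the far wormhole and again on the way back, returning at a different $\C{F}$-coordinate precisely because it used that wormhole; but note that this is essentially the paper's decomposition argument in disguise, so as written your proof is incomplete at the step the theorem most needs.
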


    \begin{proof}
        The proof in the case when $x$ and $y$ can be connected by a monotone path is a consequence of Proposition \ref{prop_distancemonotone} and Corollary \ref{lemma:monotoneminimal}, so we can assume that the two points cannot be connected by such a path. Let us further assume that $h(x)\leq h(y)$ and the points are as in case 2 in the proof of Proposition \ref{prop_inversions}. The other case follows by taking $b=h(y)$ (or $a=h(x)$). 
  
        Let $(p,\Gamma)$ be a path associated to the minimal interval. Clearly
        \begin{equation}\label{eqdistlength}
	   2b-2a-h(y)+h(x)=l(p,\Gamma)\geq d(x,y).
        \end{equation}
        This comes from the definition of $d(x,y)$ as an infimum among the lengths of paths connecting $x$ and $y$ and from the fact that  $(p,\Gamma)$ is just one among those paths.
		
        Now take $(p',\Gamma')$ to be any another path that connects $x$ and $y$. Clearly it must start at $x$ and end at $y$. What we are really interested in, however, is what it does in between. In particular we are interested in the points in which it jumps using wormholes level of order $n_j$ and $n_{j+1}$. Let us call these points (notice that they are wormholes) $\theta_j$ and $\theta_{j+1}$ respectively. We first notice that $h(\theta_j),h(\theta_{j+1})\notin[h(x),h(y)]$, since we already established that no wormholes levels of order $n_j$ or $n_{j+1}$ are in $[h(x),h(y)]$. Let us suppose that $\theta_j$ appears before $\theta_{j+1}$ while travelling along $(p',\Gamma')$, the proof in the other case is similar. We divide $(p',\Gamma')$ in three part:
        \begin{enumerate}[(1)]
            \item $(p'_1,\Gamma'_1)$, connecting $x$ to $\theta_j$,
            \item $(p'_2,\Gamma'_2)$, connecting $\theta_j$ to $\theta_{j+1}$,
            \item $(p'_1,\Gamma'_1)$, connecting $\theta_{j+1}$ to $y$.
        \end{enumerate}
        While we do not know how all this paths are defined, what we do know is that $x$ can be connected to $\theta_j$ by a monotone path. The same is true for connecting $\theta_j$ to $\theta_{j+1}$ and $\theta_{j+1}$ to $y$. This follow easily from how $\theta_j$ and $\theta_{j+1}$ were defined. Hence can use \eqref{eq:inequalitydistance} to bound from below their length:      
        \[
            l(p'_1,\Gamma'_1)\geq |h(x)-h(\theta_j)|,\ l(p'_2,\Gamma'_2)\geq|h(\theta_{j+1})-h(\theta_j)|\ \text{and}\ l(p'_3,\Gamma'_3)\geq|h(\theta_{j+1})-h(y)|.
        \]
        Now recall that $a=\omega_j$ and $b=\omega_{j+1}$ and these wormhole levels were chosen in the proof of Proposition \ref{prop_inversions} to be the closest to the interval $[h(x),h(y)]$. Hence $|h(x)-h(\theta_j)|\geq h(x)-a$ and $|h(\theta_{j+1})-h(y)|\geq b-h(y)$. Moreover $|h(\theta_{j+1})-h(\theta_j)|$ is the distance between two wormholes of depth $n_j$ and $n_{j+1}$ respectively. Hence, since $[a,b]$ is a minimal interval, $|h(\theta_{j+1})-h(\theta_j)|\geq b-a$. By adding we get
        \[
            l(p',\Gamma')=l(p'_1,\Gamma'_1)+l(p'_2,\Gamma'_2)+l(p'_3,\Gamma'_3)\geq 2b - 2a -h(y) + h(x) = l(p,\Gamma).
        \]
        From this we conclude that no other path connecting $x$ to $y$ is shorter than $(p,\Gamma)$. Hence the $\geq$ in \eqref{eqdistlength} is actually an $=$ and this allows us to conclude that $l(p,\Gamma)$ is a geodesic.
    \end{proof}
	
	We conclude this section by noticing that \cite[Proposition 1.2]{laakso2000ahlfors} follows from this theorem and from Corollary \ref{coroll_length}, hence completing our goal of proving the results left unproved in the first part of \cite{laakso2000ahlfors}.

	\section{The special case $s=3$}
	To better understand the construction of $\C{L}$ and how geodesics work, we show what happens in a simple case. In particular we chose $s=3$, that will correspond to a Laakso space of dimension $Q=1+\log_3 2$. The IFS from equation \ref{eq_IFS} is then
	\[
		f_0(x) = \frac{x}{3}\ \  \text{ and }\ \  f_1(x) = \frac{x}{3} + \frac{2}{3}
	\]
	and the corresponding attractor is the classical middle-third Cantor set.

	The unique integer such that $n\leq s < n+1$ is clearly $n=s=3$. To see where the identifications are gonna happen in $\C{F}\times I$ we start by computing a sequence $\textbf{m}=\{m_i\}$. It must satisfy:
	\begin{enumerate}[(1)]
		\item $m_i\in\{3,4\}$,
		\item $$\frac{3}{4} \prod_{j=1}^i m_j^{-1} \leq \frac{1}{3^i}\leq \frac{4}{3}\prod_{j=1}^i m_j^{-1}.$$
	\end{enumerate}

	It is easy to see that the constant sequence $m_i=3\ \forall i$ satisfies the requirements. More in general, a sequence that satisfies all the requirements must be of the form $m_i = 3$ for each but up to one $i$, which can be either 3 or 4.

	Notice that a different choice of the sequence $\textbf{m}$ would result in a slightly different space. However, the main properties that we proved in the previous section are independent on $\textbf{m}$. Hence, for the example we want to study, we can choose the one that gives the nicest computations, i.e. $\textbf{m} = \{ 3 \}_i$. 

	With this choice the value of $\omega(n_1)$ is just $\frac{n_1}{3}$ for $n_1\in\{ 1,2 \}$, which means that the wormhole levels of order 1 are only $\frac{1}{3}$ and $\frac{2}{3}$. Notice that choosing the sequence with $m_1=4$ would give as wormhole levels of order 1 the values $\frac{1}{4},\,\frac{2}{4}$ and $\frac{3}{4}$.

	With a quick computation we have the wormhole levels of order 2. Recall that the second entry of $\omega_2(\cdot,\cdot)$ cannot be 0 and, in general, this is true for the $k$-th entry of $\omega_k$.
	\begin{table}[H]
		\centering
		\begin{tabular}{lll}
		$\omega(0,1) = \nicefrac{1}{9}$ & $\omega(0,2) =\nicefrac{2}{9}$ & $\omega(1,1) = \nicefrac{4}{9}$\\
		$\omega(1,2) = \nicefrac{5}{9}$ & $\omega(2,1) = \nicefrac{7}{9}$ & $\omega(2,2) = \nicefrac{8}{9}.$
		\end{tabular}
	\end{table}
	We also show wormhole levels of order 3:
	\begin{table}[H]
		\centering
		\begin{tabular}{lll}
		$\omega(0,0,1) = \nicefrac{1}{27}\ $& $\omega(0,0,2) = \nicefrac{2}{27}\ $& $\omega(0,1,1) = \nicefrac{4}{27}$\\
		$\omega(0,1,2) = \nicefrac{5}{27}\ $& $\omega(0,2,1) = \nicefrac{7}{27}\ $& $\omega(0,2,2) = \nicefrac{8}{27}$\\
		$\omega(1,0,1) = \nicefrac{10}{27}\ $& $\omega(1,0,2) = \nicefrac{11}{27}\ $& $\omega(1,1,1) = \nicefrac{13}{27}$\\
		$\omega(1,1,2) = \nicefrac{14}{27}\ $& $\omega(1,2,1) = \nicefrac{16}{27}\ $& $\omega(1,2,2) = \nicefrac{17}{27}$\\
		$\omega(2,0,1) = \nicefrac{19}{27}\ $& $\omega(2,0,2) = \nicefrac{20}{27}\ $& $\omega(2,1,1) = \nicefrac{22}{27}$\\
		$\omega(2,1,2) = \nicefrac{23}{27}\ $& $\omega(2,2,1) = \nicefrac{25}{27}\ $& $\omega(2,2,2) = \nicefrac{26}{27}$.
		\end{tabular}
	\end{table}
	Since we choose $\textbf{m}$ to be the constant sequence we can easily compute every value for the functions $\omega_k$. Indeed  \eqref{omega} simplifies to
	\begin{equation}\label{defomegak}
		\omega(n_1,\dots,n_k) = \sum_{i=1}^k \frac{n_i}{3^i}.
	\end{equation}

	Now we can see how the map $\pi$ works in this example. Two points in $\C{F}\times I$ are identified if they have the same height, which is a wormhole level of order $k$ for some $k$, and if their horizontal distance is $\frac{2}{3^k}$.
	
	From the wormhole levels of order $1$ we get that all the points in $\C{F}_0\times\{\frac{1}{3}\}$ are identified with their corresponding point in $\C{F}_1\times\{\frac{1}{3}\}$ and the same is true for points in $\C{F}_0\times\{\frac{2}{3}\}$ with points in $\C{F}_1\times\{\frac{2}{3}\}$.
	
	A wormhole level of order 2, for example $\frac{5}{9}$, identifies points in $\C{F}_{00}\times\{\frac{5}{9}\}$ with the corresponding point in $\C{F}_{01}\times\{\frac{5}{9}\}$ and points in $\C{F}_{10}\times\{\frac{5}{9}\}$ with the corresponding point in $\C{F}_{11}\times\{\frac{5}{9}\}$. However, it will not identify points in $\C{F}_{00}\times\{\frac{5}{9}\}$ with points in $\C{F}_{11}\times\{\frac{5}{9}\}$, because their distance is at least $\frac{7}{9}$, which is bigger than the distance of $\frac{2}{9}$ that wormholes level or order 2 allow to cover.

	We can now show an example of how paths in a Laakso space look like. Take $x=[0,\frac{1}{5}]$ and $y=[\frac{20}{27},\frac{1}{10}]$. It is easy to see that $0$ and $\frac{20}{27}$ correspond to the strings $a=\overline{0}=000\dots$ and $b=101\overline{0}=101000\dots$ in the Cantor set. Hence $x_1\parallel x_2$ and the algorithm in the proof of Proposition \ref{proppath} will stop after a finite number of iterations.
	\begin{enumerate}[(1)]
		\item Let $q_0=x$. The smallest $i$ such that $(x_1)_i\neq (x_2)_i$ is $i=1$.
		\item The wormhole level of order 1 closest to $y_1$ is $\omega^1=\omega_1(1) = \frac{1}{3}$.
		\item We define $\gamma_1(t):=\left(0,\frac{1}{5}+t(\frac{1}{3}-\frac{1}{5})\right)$. $\gamma_1$ connects $(0,\frac{1}{5})$ to $(0,\frac{1}{3})$ and since $\frac{1}{3}$ is a wormhole level of depth 1, $\pi$ will identify $(0,\frac{1}{3})$ with $(x_1',\frac{1}{3})=:q_1$, where $x_1'=\nu^1(x_1)=1\overline{0}=1000\dots$.
		\item Since $\nu^1(x_1) \neq x_2$ we restart by replacing $q_0$ with $q_1$ and $x_1$ with $x_1'$. In the second iteration we find $i_2=3$, choose $\omega^2=\omega_3(1,0,1)=\frac{10}{27}$ (it is not the only possible choice), define the path $\gamma_2(t):=\left(\frac{2}{3},\frac{1}{3}+t(\frac{10}{27}-\frac{1}{3})\right)$ and $q_2:=(x_1'',\frac{10}{27})$, where $x_1''=\nu^3(x_1')$. Notice that $x_1''=101\overline{0}=x_2$, hence the halting condition is met and we stop the algorithm.
	\end{enumerate}
	To finish the construction of the path we define $\gamma_3(t):=(\frac{20}{27},\frac{10}{27}+t(\frac{1}{10}-\frac{10}{27}))$, that joins $q_2$ with $(\frac{20}{27},\frac{1}{10})$. Notice that this last part of the path is going downward, as it is clear from the sign of the coefficient of $t$.
	We then use all the line segments in $\C{F}\times I$ to define
	\begin{equation*}
		\Gamma(t):=
		\begin{cases}
			\gamma_1(3t)& \text{if } t\in[0,\frac{1}{3}]\\
			\gamma_2(3t-1)& \text{if } t\in[\frac{1}{3},\frac{2}{3}]\\
			\gamma_3(3t-2)& \text{if } t\in[\frac{2}{3},1]
		\end{cases}
		=
		\begin{cases}
			(0, \frac{1}{5}+\frac{2t}{5}) & \text{if } t\in[0,\frac{1}{3}]\\
			(\frac{2}{3}, \frac{8}{27} + \frac{t}{9}) & \text{if } t\in[\frac{1}{3},\frac{2}{3}]\\
			(\frac{20}{27}, -\frac{73t}{90} + \frac{246}{270}) & \text{if } t\in[\frac{2}{3},1]	
		\end{cases}
	\end{equation*}
	and $p=\pi(\Gamma([0,1]))$ is a continuous path in $\C{L}$ that joins $x$ and $y$.
	
	It is easy to see that $l(p,\Gamma) = \frac{119}{270}$, however we might wonder if $p$ is a geodesic. In order to answer this question we first notice that $[\frac{1}{10},\frac{1}{3}]$ is a minimal interval for the points $x$ and $y$. Hence, from Corollary \ref{coroll_length} and Theorem \ref{maintheorem}, $d(x,y) = \frac{11}{30} < \frac{119}{270}$, i.e. $p$ is not a geodesic. In particular a geodesic can be constructed by following the steps used to construct $p$, but with a different choice of $\omega^2$ in the second iteration (any wormhole level of order $3$ between heights $\frac{1}{3}$ and $\frac{1}{10}$ will do). Notice that the difference between the length of $p$ and the length of a geodesic is exactly $\frac{2}{27}$, i.e. the extra distance travelled in order to use the wormhole level $\omega^2$ in step (3).

	For completeness let us also see an example of a path that comes from a family of infinitely many line segments in $\C{F}\times I$. Take $x=[0,0]$ and $y=[1,1]$. A minimal interval for these points is the whole unit segment $[0,1]$, hence $d(x,y)=1$. Let us construct a path that connects the two points. Since $x_1=\overline{0}$ and $x_2=\overline{1}$, we have that $x_1\nparallel x_2$. If we take the jumps in increasing order of depth then the first segment in $\C{F}\times I$ that we need to define is $\{\overline{0}\}\times[0,\frac{1}{3}]$. The final point of this segment is $(0,\frac{1}{3})$ which, since $\frac{1}{3}=\omega_1(1)$, is identified by $\pi$ with the point $(\frac{2}{3},\frac{1}{3})$. The second segment is then $\{\frac{2}{3}\}\times[\frac{1}{3},\frac{4}{9}]$, whose final point is identified with $(\frac{8}{9},\frac{4}{9})$ (because $\frac{4}{9}=\omega_2(1,1)$). More in general the $j$-th iteration of the algorithm will correspond to defining the segment
	\begin{equation*}\label{eq.segment}
		\left\{\sum_{i=1}^j\frac{2}{3^i}\right\}\times \left[\sum_{i=1}^j \frac{1}{3^i},\sum_{i=1}^{j+1} \frac{1}{3^i}\right]\subset\C{F}\times I.
	\end{equation*}
	It is easy to verify that the end points of the segments are converging to the point $(1,\frac{1}{2})$, where $\frac{1}{2}$ is $\overline{\omega}$ in the proof of Proposition \ref{proppath}. The last segment we define is then $\{1\}\times[\frac{1}{2},1]$. Notice that the family $\Gamma$ that collects all the segments is countable. We are left to check the length of the path $p$ that connects $x$ to $y$ and is defined as $p:=\pi(\Gamma)$. It is easy to see that
	\[
	l(p,\Gamma) = \sum_{i\in\B{N}}\frac{1}{3^i} + \frac{1}{2} = 1,
	\]
	hence $p$ is a geodesic. 
	
\section{Further developments}

Research involving Laakso spaces can go in several directions. We list some possible ideas in no particular order.
\begin{enumerate}
    \item In \cite[Remark 3.2]{laakso2000ahlfors} Laakso hinted at the possibility of constructing variations of $\C{L}$ with other self-similar fractals instead of $\C{F}$ or using a unit cube $I^n$ instead of $I$. While, for the reasons explained in the introduction, it is not clear what he exactly had in mind, these speculations are still open for debate and the construction of Laakso spaces with different sets as \textquotedblleft base\textquotedblright\ could prove interesting.
    \item Related to the above point, it is worth mentioning \cite[Remark 7.8]{antonelli2023carnot}. There, the authors suggest that it would be interesting to investigate the construction of a Laakso-type space based on a non-Abelian Carnot model. This path is certainly worth exploring in the near future.
    \item In \cite{laakso2000ahlfors} and later in \cite{steinhurst2010diffusions} we can find many reasons, like the existence of upper gradients and the validity of a Poincar\'e inequality, that suggest the possibility of studying many classical results in the framework of Laakso spaces. This was again implied by Laakso in \cite[Remark 3.3]{laakso2000ahlfors}. As shown in \cite{capolli2022maximal}, some classical results that holds in many common settings like Euclidean spaces and Carnot groups does not hold in Laakso spaces. It could be interesting to further investigate in this direction and in particular to study to what degree the absence of a linear structure is an obstacle.
    \item The existence of UDS in Laakso spaces is still open. In \cite{capolli2022maximal} it was only proved that the classical approach does not work in $\C{L}$, but the existence of UDS is still on the table. It is possible that, given the particular nature of the setting, the construction of UDS (provided that they exist) takes a completely different approach.
    \item Laakso spaces can be constructed as inverse limit spaces, see \cite{cheeger2015inverse, lang2001bilipschitz}. An interesting question is then if techniques from \cite{capolli2022maximal} and this paper can be used to study geometric properties of more general metric measure spaces obtained as inverse limit.
\end{enumerate}

\medskip

{\bf Acknowledgments.} 
    The author is a member of {\it Gruppo Nazionale per l’Analisi Matematica, la Probabilità e le loro Applicazioni} (GNAMPA) of {\it Istituto Nazionale di Alta Matematica} (INdAM) and acknowledge the support of the group.

    The author also acknowledges the support granted by the European Union – NextGenerationEU Project “NewSRG - New directions in SubRiemannian Geometry” within the Program STARS@UNIPD 2021.

    Part of this work was done while the author was a post-doctoral stutend at the Polish Academy of Sciences (IMPAN) in Warsaw. The author is thankful to the institution for the support showed during this period.

    Lastly, the author is thankful to Andrea Pinamonti (University of Trento) and Gareth Speight (University of Cincinnati) for suggesting the topic and for useful moments of discussion and to Tomasz Adamowich (IMPAN) for encouraging him in the pursuit of this project.

    \bibliographystyle{plain}
    \bibliography{main}

\end{document}